\documentclass[final]{publmathdeb}

\usepackage{amsmath, amsfonts, amssymb}
\usepackage{graphicx}
\usepackage[percent]{overpic}
\usepackage{tabularx}
\usepackage{algorithm,algorithmic}
\usepackage{colonequals}
\usepackage{epstopdf}

\newenvironment{ntabbing}{\setlength{\topsep}{0pt} \setlength{\partopsep}{0pt} \tabbing}{\endtabbing}

\newtheorem{theorem}{Theorem}[section]

\newtheorem{proposition}[theorem]{Proposition}
\newtheorem{corollary}[theorem]{Corollary}
\newtheorem{conjecture}[theorem]{Conjecture}
\theoremstyle{definition}

\theoremstyle{remark}

\numberwithin{equation}{section}

\newcommand{\ex}{\exists\:}
\newcommand{\fa}{\forall\:}
\newcommand{\ce}{\:\colonequals\:}

\newcommand{\abs}[1]{\left \lvert #1 \right \rvert}

\newcommand{\floor}[1]{\left \lfloor #1 \right \rfloor}
\newcommand{\ceiling}[1]{\left \lceil #1 \right \rceil}
\newcommand{\fractionalpart}[1]{\left \{ #1 \right \}}

\newcommand{\Max}{\operatorname{max}}

\newcommand{\Int}[1]{\operatorname{int}\left( #1 \right)}

\newcommand{\id}{\operatorname{id}}

\newcommand{\set}[1]{\left\{ #1 \right\}}
\newcommand{\I}{\mathrm{i}}
\newcommand{\N}{\mathbb{N}}
\newcommand{\Z}{\mathbb{Z}}

\newcommand{\R}{\mathbb{R}}
\newcommand{\C}{\mathbb{C}}
\newcommand{\Zi}{\mathbb{Z}\left[\mathrm{i}\right]}

\newcommand{\D}[1]{\mathcal{D}_{#1}}

\newcommand{\G}[1]{\mathcal{G}_{#1}}
\newcommand{\GN}[1]{\mathcal{G}_{#1}^{(0)}}
\newcommand{\GC}{\mathcal{G}_C}
\renewcommand{\vector}[1]{{\mathbf{#1}}}
\newcommand{\powerset}{\mathcal{P}}

\newcommand{\Line}{\,---$\mspace{4.511111111111mu}$\,}
\newcommand{\Dots}{$\mspace{1.4mu}\cdots$\,}
\newcommand{\MLine}{\mspace{5.411111111111mu}\text{---}\mspace{5.411111111111mu}}
\newcommand{\MDots}{\cdots}

\begin{document}

\title{On the characterization of Peth\H{o}'s Loudspeaker}

\author{Mario Weitzer}
\address{
Chair of Mathematics and Statistics\\
Montanuniversit\"at Leoben\\
Franz Josef-Stra\ss{}e 18, A-8700, Leoben\\
Austria
}
\email{mario.weitzer@unileoben.ac.at} 

\keywords{shift radix systems, numeration systems, almost linear recurrences, discrete dynamical systems, finite and periodic orbits}

\subjclass{11A63}

\submitted{October 01, 2014}


\thanks{The author is supported by the Austrian Science Fund (FWF): W1230, Doctoral Program ``Discrete Mathematics''. The author would like to thank Prof. Peter Kirschenhofer and Prof. Attila Peth\H{o} for several stimulating discussions on the subject of this paper.}

\begin{abstract}
For $d \in \N$ and $\vector{r} \in \C^d$ let $\gamma_\vector{r}: \Zi^d \to \Zi^d$, where $\gamma_\vector{r}(\vector{a})=(a_2,\ldots,a_d,$ $-\lfloor\vector{r}\vector{a}\rfloor)$ for $\vector{a}=(a_1,\ldots,a_d)$, denote the {\em (d-dimensional) Gaussian shift radix system associated with $\vector{r}$}. $\gamma_\vector{r}$ is said to have the {\em finiteness property} iff all orbits of $\gamma_\vector{r}$ end up in $(0,\ldots,0)$; the set of all corresponding $\vector{r} \in \C^d$ is denoted by $\GN{d}$. It has a very complicated structure even for $d=1$.

In the present paper a conjecture on the full characterization of $\GN{1}$ - which is known as Peth\H{o}'s Loudspeaker - is formulated and proven in substantial parts. It is shown that $\GN{1}$ is contained in a conjectured characterizing set $\GC$. The other inclusion is settled algorithmically for large regions leaving only small areas of uncertainty. Furthermore the circumference and area of the Loudspeaker are computed under the assumption that the conjecture holds. The proven parts of the conjecture also allow to fully identify all so-called critical and weakly critical points of $\GN{1}$.
\end{abstract}

\maketitle

\section{Introduction}
\label{SIntroduction}
In 2005 Akiyama {\em et al.}~\cite{ABBPTI} introduced so-called {\em shift radix systems} (cf. also \cite{ABPTII, ABPTIII, ABPTIV}). For a natural number $d$ and a real vector $\vector{r} \in \R^d$ the mapping $\tau_\vector{r}: \Z^d \to \Z^d$ defined by
\begin{equation}
\tau_\vector{r}(\vector{a})=(a_2,\dots,a_d,-\floor{\vector{r} \vector{a}}) \qquad (\vector{a}=(a_1,\ldots,a_d)),
\end{equation}
is called the {\em d-dimensional shift radix system associated with $\vector{r}$} ({\em SRS}) and $\vector{r}$ its {\em parameter}. In \cite{BrunotteKirschenhoferThuswaldner11} the notion has been generalized to the complex setting. For a complex vector $\vector{r} \in \C^d$ the analogously defined mapping $\gamma_\vector{r}: \Zi^d \to \Zi^d$ is called the {\em d-dimensional Gaussian shift radix system associated with $\vector{r}$} ({\em GSRS}) (note that $\vector{r} \vector{a}\ce r_1a_1+\cdots+r_da_d$ and $\floor{z}\ce\floor{\Re(z)}+\I\floor{\Im(z)}$ for $z\in\C$). Let\footnote{$\N\ce\set{n\in\Z\mid n>0}.$}
\begin{align}
\G{d} &\ce \set{\vector{r} \in \C^d \mid \fa \vector{a} \in \Zi^d: \ex (m,n) \in \N^2: m \neq n \land \gamma_\vector{r}^m(\vector{a}) = \gamma_\vector{r}^n(\vector{a})}\\
\GN{d} &\ce \set{\vector{r} \in \C^d \mid \fa \vector{a} \in \Zi^d: \ex n \in \N: \gamma_\vector{r}^n(\vector{a}) = \vector{0}}
\end{align}
where for any $n \in \N_0$, $\gamma_\vector{r}^n(\vector{a})$ means the $n$-fold iterative application of $\gamma_\vector{r}$ to $\vector{a}$. The GSRS $\gamma_\vector{r}$ is said\footnote{\label{FN_identify} From now on a real vector $\vector{r}$ and its associated GSRS $\gamma_\vector{r}$ shall be identified in terms of properties.} to have the {\em finiteness property} iff $\vector{r} \in \GN{d}$.

SRS are closely related to two important notions of numeration systems. Indeed, as pointed out in \cite{ABBPTI, Hollander:96}, SRS form a generalization of $\beta$-expansions (see \cite{FS:92, Parry:60, Renyi:57}) and canonical number systems (CNS) (see \cite{KS:75, KP91, Pethoe:91} and \cite[Section~4.1]{Knuth:98}). The finiteness properties in the contexts of $\beta$-expansions and CNS are in one-to-one correspondence with the finiteness property for SRS.

GSRS on the other hand are a generalization of Gaussian numeration systems \cite{JR:95}. For a $\beta\in\Zi\setminus\set{0}$ and $\mathcal{C}\ce\set{c\in\Z[i]\mid\floor{c/\beta}=0}$ the pair $\left(\beta,\mathcal{C}\right)$ is called {\em Gaussian numeration system} iff every $x\in\Zi$ can be written uniquely in the form $x=a_0+a_1\beta+\ldots+a_n\beta^n$ where $n\in\N_0$, $a_i\in\mathcal{C}$ for $i\in\set{0,\ldots,n}$ and $c_n=0$ iff $x=0$. It is shown in \cite{BrunotteKirschenhoferThuswaldner11} that $(\beta,\mathcal{C})$ is a Gaussian numeration system iff $-1/\beta\in\GN{1}$. Furthermore the digit representation of $x$ with respect to $(\beta,\mathcal{C})$ is given by $a_i=\beta\fractionalpart{-1/\beta\gamma_{-1/\beta}^i(-x)}$ where $i\in\N_0$ and the fractional part of some $z\in\C$ is defined as $\fractionalpart{z}\ce\fractionalpart{\Re(z)}+\I\fractionalpart{\Im(z)}$.

In the present paper a conjecture on the characterization of $\GN{1}$ is given. Because of its shape and in honor of Attila Peth\H{o} $\GN{1}$ is known as Peth\H{o}'s Loudspeaker \cite{BrunotteKirschenhoferThuswaldner11}. In Section~\ref{SConjecture} the set $\GC$ is defined and it is conjectured that $\GN{1}=\GC$. The main result is stated in Section~\ref{SResult} and proven in Section~\ref{SInclusion1} and Section~\ref{SInclusion2} where it is shown that $\GN{1}\subseteq\GC$ and the other inclusion is settled in large parts leaving only a very small area of uncertainty. In Section~\ref{SConsequences} consequences like the Loudspeaker's circumference and area are derived under the assumption that the main conjecture holds. Finally all weakly critical and critical points of the Loudspeaker are identified in Section~\ref{SCritical}.

\section{The conjecture}
\label{SConjecture}
\begin{ntabbing}
Let \= $P_{10}(n)$ \= $\ce \left(1-\frac{1}{-n^2+30n+32},\frac{n+1}{-2n^2+60n+64}\right)$, \quad \= $n \in \Z$\kill
Let \> $P_0(1)$ \> $\ce (1,0)$, \quad $P_0(2) \ce \left(\frac{22}{\phantom{+}\mspace{-14mu}23},\frac{4}{23}\right)$, \quad $P_0(3) \ce \left(\frac{26}{\phantom{+}\mspace{-14mu}27},\frac{4}{27}\right)$\\
\> $P_1(n)$ \> $\ce \left(1-\frac{2}{n^2-2},\frac{n}{n^2-2}\right)$, \> $n \in \Z$\\
\> $P_2(n)$ \> $\ce \left(1-\frac{1}{n^2-n-1},\frac{n-1}{n^2-n-1}\right)$, \> $n \in \Z$\\
\> $P_3(n)$ \> $\ce \left(1-\frac{1}{n^2-n},\frac{n-1}{n^2-n}\right)$, \> $n \in \Z \setminus \{0, 1\}$\\
\> $P_4(n)$ \> $\ce \left(1-\frac{1}{\phantom{+}\mspace{-14mu}n^2},\frac{n}{n^2}\right)$, \> $n \in \Z \setminus \{0\}$\\
\> $P_5(n)$ \> $\ce \left(1-\frac{1}{n^2+1},\frac{n}{n^2+1}\right)$, \> $n \in \Z$\\
\> $P_6(n)$ \> $\ce \left(1-\frac{1}{n^2+n+1},\frac{n+1}{n^2+n+1}\right)$, \> $n \in \Z$\\
\> $P_7(n)$ \> $\ce \left(1-\frac{1}{n^2+n+2},\frac{n+1}{n^2+n+2}\right)$, \> $n \in \Z$\\
\> $P_8(n)$ \> $\ce \left(1-\frac{1}{n^2+2},\frac{n}{n^2+2}\right)$, \> $n \in \Z$\\
\> $P_9(n)$ \> $\ce \left(1-\frac{1}{n^2+3},\frac{n}{n^2+3}\right)$, \> $n \in \Z$\\
\> $P_{10}(n)$ \> $\ce \left(1-\frac{2}{n^2+n+6},\frac{n+1}{n^2+n+6}\right)$, \> $n \in \Z$
\end{ntabbing}
\bigskip
and let $\GC$ denote the union of the region bounded by the following infinite polygonal chain and the same region reflected at the real axis. The boundary of $\GC$ shall also be as given below where a solid line between two points indicates belonging of the corresponding line segment and an overline over a point indicates belonging of the corresponding vertex to $\GC$.\hfill
\begin{figure}[H]
\fontsize{7pt}{6pt}\selectfont
\begin{ntabbing}
$\overline{P_0(3)}$ \Line $\overline{P_2(7)}$ \Line \=$\overline{P_3(7)}$ \Dots \=$P_4(7)$ \Dots \=$\overline{P_5(7)}$ \=\Line $P_6(7)$ \Dots $P_7(7)$ \Dots $\overline{P_8(7)}$ \Line $\overline{P_9(7)}$\kill
\>\>$P_0(1)$ \Line $\overline{P_5(0)}$ \Line $P_6(0)$ \Dots\\
\>\>\>$\overline{P_5(1)}$ \Line $P_6(1)$ \Line $\overline{P_7(0)}$ \Dots $P_7(1)$ \Dots\\
\>\>\>$\overline{P_5(2)}$ \Line $\overline{P_6(2)}$ \Dots $P_7(2)$ \Dots $\overline{P_8(2)}$ \Dots\\
\>\>$P_4(3)$ \Dots $\overline{P_5(3)}$ \Line $P_6(3)$ \Dots $P_7(3)$ \Dots $\overline{P_8(3)}$ \Line\\
\>$\overline{P_3(4)}$ \Dots $P_4(4)$ \Dots $\overline{P_5(4)}$ \Line $P_6(4)$ \Dots $P_7(4)$ \Dots $\overline{P_8(4)}$ \Line\\
\>$\overline{P_3(5)}$ \Dots $P_4(5)$ \Dots $\overline{P_5(5)}$ \Line $P_6(5)$ \Dots $P_7(5)$ \Dots $\overline{P_8(5)}$ \Line $\overline{P_9(5)}$ \Line\\
$\overline{P_0(2)}$ \Line $\overline{P_2(6)}$ \Line $\overline{P_3(6)}$ \Dots $P_4(6)$ \Dots $\overline{P_5(6)}$ \Line $P_6(6)$ \Dots $P_7(6)$ \Dots $\overline{P_8(6)}$ \Line $\overline{P_9(6)}$ \Line\\
$\overline{P_0(3)}$ \Line $\overline{P_2(7)}$ \Line $\overline{P_3(7)}$ \Dots $P_4(7)$ \Dots $\overline{P_5(7)}$ \Line $P_6(7)$ \Dots $P_7(7)$ \Dots $\overline{P_8(7)}$ \Line $\overline{P_9(7)}$ \Line\\
$\overline{P_1(8)}$ \Line $\overline{P_2(8)}$ \Line $\overline{P_3(8)}$ \Dots $P_4(8)$ \Dots $\overline{P_5(8)}$ \Line $P_6(8)$ \Dots $P_7(8)$ \Dots $\overline{P_8(8)}$ \Line $\overline{P_9(8)}$ \Line $\overline{P_{10}(8)}$ \Dots\\
\>\>\>\> $\,\mspace{6.38mu}\vdots$\\
$\overline{P_1(\mspace{-0.9025mu}n\mspace{-0.9025mu})}$ \Line $\overline{P_2(\mspace{-0.9025mu}n\mspace{-0.9025mu})}$ \Line $\overline{P_3(\mspace{-0.9025mu}n\mspace{-0.9025mu})}$ \Dots $P_4(\mspace{-0.9025mu}n\mspace{-0.9025mu})$ \Dots $\overline{P_5(\mspace{-0.9025mu}n\mspace{-0.9025mu})}$ \Line $P_6(\mspace{-0.9025mu}n\mspace{-0.9025mu})$ \Dots $P_7(\mspace{-0.9025mu}n\mspace{-0.9025mu})$ \Dots $\overline{P_8(\mspace{-0.9025mu}n\mspace{-0.9025mu})}$ \Line $\overline{P_9(\mspace{-0.9025mu}n\mspace{-0.9025mu})}$ \Line $\overline{P_{10}(\mspace{-0.9025mu}n\mspace{-0.9025mu})}$ \Dots\\
\>\>\>\> $\,\mspace{6.38mu}\vdots$
\end{ntabbing}
\end{figure}
\vspace{-1.25\baselineskip}
\begin{conjecture}
\label{CMain}
If $\GC$ is as defined above then $\GN{1}=\GC$.
\end{conjecture}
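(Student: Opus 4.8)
The plan is to establish the two inclusions $\GN{1}\subseteq\GC$ and $\GC\subseteq\GN{1}$ by quite different methods, obtaining — as the abstract anticipates — the first in full and the second only on a large subregion. Throughout one exploits the reflection symmetry $\gamma_{\overline{\vector{r}}}=\overline{\gamma_{\vector{r}}}$ to restrict to parameters with nonnegative imaginary part, and the a priori bound that $\G{1}$, hence $\GN{1}$, lies in the closed unit disc $\overline{D}\ce\set{r\in\C\mid\abs{r}\le 1}$: if $\abs{r}>1$ then a Gaussian integer of sufficiently large modulus has an escaping orbit, so no orbit of $\gamma_{\vector{r}}$ can be eventually periodic. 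Hence it suffices to compare $\GC$ with $\GN{1}$ inside $\overline{D}$, and one first notes that $\GC\subseteq\overline{D}$.

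For $\GN{1}\subseteq\GC$ I would argue contrapositively: for every $\vector{r}\in\overline{D}\setminus\GC$ produce a nonzero periodic orbit of $\gamma_{\vector{r}}$. A finite sequence $\pi=(a_0,\ldots,a_{L-1})$ of Gaussian integers is a cycle of $\gamma_{\vector{r}}$ precisely when $-\floor{\vector{r}a_j}=a_{(j+1)\bmod L}$ for all $j$, and each of these conditions is equivalent to the pair of linear inequalities $0\le\Re(\vector{r}a_j)+\Re(a_{j+1})<1$ and $0\le\Im(\vector{r}a_j)+\Im(a_{j+1})<1$ in $\vector{r}$; consequently the set $W_\pi$ of parameters admitting $\pi$ is a half-open convex polygon. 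The problem then becomes the purely geometric assertion that $\overline{D}\setminus\GC$ is covered by $\bigcup_\pi W_\pi$ over nonzero cycles $\pi$. I would single out an explicit infinite family of cycles — those whose witness polygons carry the arcs $P_1(n),\ldots,P_{10}(n)$ and the sporadic points $P_0(1),P_0(2),P_0(3)$ as edges or vertices — and verify that these polygons fit together along exactly those curves, leaving no gap outside $\GC$. Identifying the correct family and checking that it tessellates the complement of $\GC$, especially near the accumulation point $(1,0)$ where infinitely many of the $W_\pi$ pile up, is the combinatorial core of this direction.

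For the reverse inclusion $\GC\subseteq\GN{1}$ a single witness no longer suffices, since one must prove that \emph{every} orbit of $\gamma_{\vector{r}}$ terminates in $\vector{0}$. Here I would invoke the standard reduction (the Gaussian analogue of Brunotte's criterion) that finiteness is decided by the finitely many orbits starting from a fixed generating set of small Gaussian integers, and run this check uniformly over a small closed box of parameters at a time: the image of a finite union of boxes of starting points under $\gamma$, as $\vector{r}$ ranges over a parameter box, is again contained in a computable finite union of boxes, so iterating this set-valued dynamics and testing whether it collapses to $\set{\vector{0}}$ settles the finiteness property for all $\vector{r}$ in that box simultaneously. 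One then decomposes $\GC$ into finitely many such boxes, refining where necessary, and concludes $\GC\subseteq\GN{1}$ on every box where the iteration halts successfully.

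The main obstacle is the behaviour at $\vector{r}=(1,0)$: there the boundary of $\GC$ exhibits the intricate spiralling structure that gives the loudspeaker its name, orbit lengths grow without bound, the polygons $W_\pi$ shrink to points, and no finite box decomposition can reach this corner — so the algorithmic half can only be proved on $\GC$ with a small neighbourhood of $(1,0)$ (and possibly a few further sporadic spots) removed, which is exactly the ``small area of uncertainty'' left open. A secondary difficulty is bookkeeping: making the tessellation argument for $\GN{1}\subseteq\GC$ rigorous requires classifying all cycles whose polygons meet $\overline{D}\setminus\GC$, and one must ensure that the stated convention on which boundary segments and vertices belong to $\GC$ (the solid lines and overlined points of the figure) matches the half-open nature of the $W_\pi$ exactly.
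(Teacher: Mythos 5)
Your plan coincides with the paper's actual strategy: the inclusion $\GN{1}\subseteq\GC$ is obtained by exhibiting explicit infinite families of nontrivial cycles whose half-open cutout polygons cover the complement of $\GC$ in the unit disk, and the reverse inclusion is verified algorithmically via finite sets of witnesses over small parameter regions, succeeding on the disk of radius $\frac{2047}{2048}$ with the remaining uncertainty concentrated where the pikes accumulate at $(1,0)$. As you yourself anticipate, this yields exactly Theorem~\ref{TMain} and not the conjecture itself, which remains open in the paper as well.
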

Note that for all $i\in\{1, \ldots, 10\}: \underset{n \to \infty}{lim} P_i(n) = P_0(1)$. The following figure shows the part of $\GC$ which lies in the first quadrant and a magnification of the part where it gets regular. It can be seen that ultimately the boundary of $\GC$ consists of a sequence of pikes which have ten vertices each. For $n \in \N$ pike $n$ shall refer to the pike which contains the vertex $P_5(n)$.
\begin{figure}[H]
\includegraphics[width=5cm]{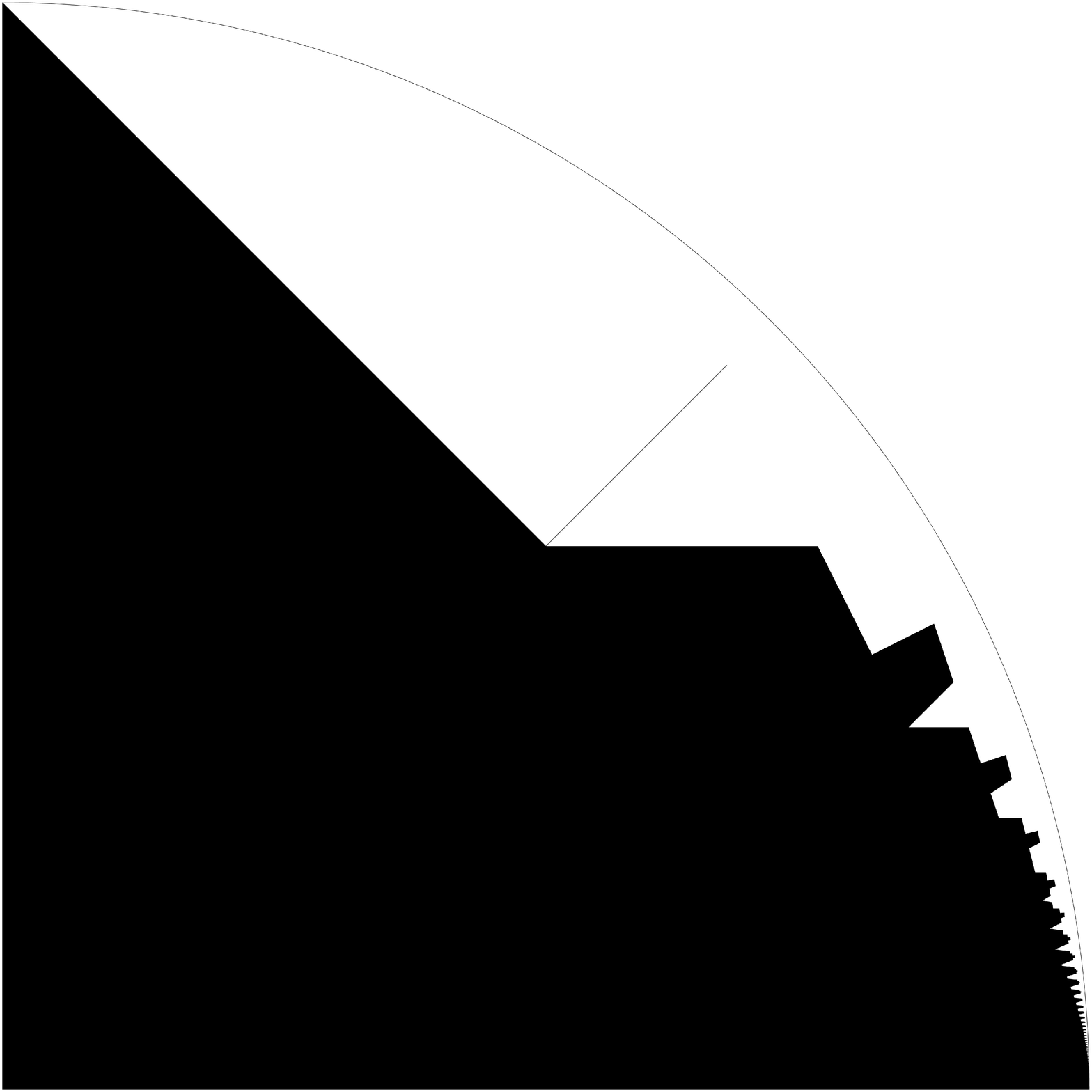}
\includegraphics[width=5cm]{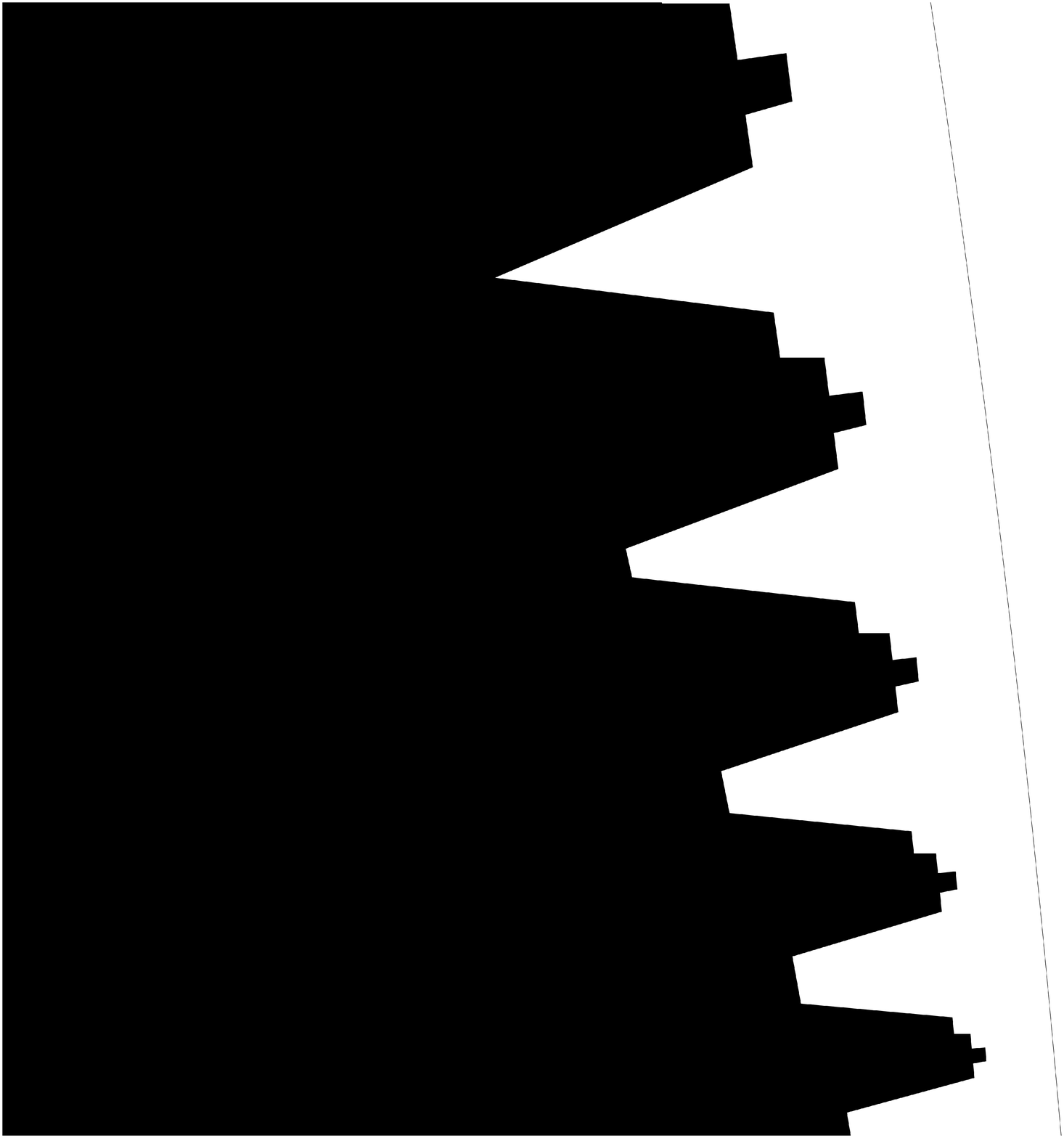}
\end{figure}

\section{The main result}
\label{SResult}
\begin{theorem}
\label{TMain}
Let $\GC$ be as in Section~\ref{SConjecture} and $D\ce\set{z\in\C\mid\abs{z}\leq\frac{2047}{2048}}$. Then
\begin{itemize}
\item[(i)]
$\GN{1} \subseteq \GC$
\item[(ii)]
$\GC \cap D \subseteq \GN{1} \cap D$
\end{itemize}
\end{theorem}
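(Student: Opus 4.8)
The plan is to prove the two inclusions by separate, complementary reductions, each turning an a priori infinite problem into a finite one.

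\emph{Inclusion (i), $\GN{1}\subseteq\GC$.} I would argue the contrapositive: given $r\notin\GC$, exhibit an $\vector{a}\in\Zi$ whose $\gamma_r$-orbit never hits $\vector 0$. The basic tool is $\abs{\floor z-z}<\sqrt2$ for all $z\in\C$, whence $\abs r\abs{\vector a}-\sqrt2<\abs{\gamma_r(\vector a)}<\abs r\abs{\vector a}+\sqrt2$. If $\abs r>1$, then for $\abs{\vector a}$ large the orbit of $\vector a$ has strictly increasing modulus and hence escapes; combining this with the complex-conjugation symmetry $\gamma_{\bar r}(\bar{\vector a})=\overline{\gamma_r(\vector a)}$ and a separate argument on the unit circle reduces everything to $r$ with $\abs r<1$, $\Im r\geq0$, $r\notin\GC$. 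For such $r$ the same estimate shows every orbit is eventually periodic, so it suffices to produce one nonzero $\gamma_r$-cycle (then $c_0$ from that cycle is the desired $\vector a$).

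The mechanism for that: for a fixed tuple $\vector c=(c_0,\ldots,c_{L-1})\in\Zi^L$, the set of $r$ for which $\vector c$ is a cyclically ordered $\gamma_r$-cycle, i.e.\ $c_{j+1}=-\floor{rc_j}$ for all $j\bmod L$, is $R_{\vector c}\ce\set{r\in\C\mid rc_j+c_{j+1}\in[0,1)^2\text{ for all }j}$, a (possibly empty) convex polygon — each condition carves out a half-open square in $r$, rotated and scaled by $1/c_j$ — and its half-open nature is precisely what should match the solid/dotted edges and overlined/plain vertices in the description of $\partial\GC$. So I would (a) list the finitely many short cycles that cover the bulk of the exterior, together with the ten families $\vector c_i(n)$, $i\in\set{1,\ldots,10}$, attached to the boundary arcs $P_i(\cdot)$; (b) check that each $R_{\vector c_i(n)}$ carries the arc $P_i(n)$ (between the appropriate neighbouring vertices) on one of its edges; and (c) prove $\set{z\in\C\mid\abs z<1,\ \Im z\geq0}\setminus\GC\subseteq\bigcup_{\vector c\neq(0)}R_{\vector c}$. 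Step (c) is the crux near the accumulation point $P_0(1)=(1,0)$: one needs a closed-form, $n$-uniform description of the cycles $\vector c_i(n)$ — their lengths grow, but with an eventually periodic internal pattern, mirrored by each $P_i(n)$ being a fixed rational function of $n$ — so that the polygons $R_{\vector c_i(n)}$ sweep out the notches between consecutive pikes and pinch down onto $(1,0)$.

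\emph{Inclusion (ii), $\GC\cap D\subseteq\GN{1}\cap D$.} Since $\GC\cap D\subseteq D$, it suffices to show $\GC\cap D\subseteq\GN{1}$. Fix $r$ with $\abs r\leq\frac{2047}{2048}$. From $\abs{\gamma_r(\vector a)}<\abs r\abs{\vector a}+\sqrt2$ one gets that $B\ce\set{\vector a\in\Zi\mid\abs{\vector a}\leq2048\sqrt2}$ is absorbing and forward invariant: every $\gamma_r$-orbit eventually enters $B$, and for $\vector a\in B$ one has $\abs{\gamma_r(\vector a)}<\frac{2047}{2048}\cdot2048\sqrt2+\sqrt2=2048\sqrt2$. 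Hence $r\in\GN{1}$ iff $\vector0$ is reachable from every $\vector a\in B$ under $\gamma_r$ — finite for each fixed $r$, but to be verified over the whole two-dimensional set $\GC\cap D$ simultaneously. To that end I would cover $\GC\cap D$ (including the parts of its boundary that belong to $\GC$) by finitely many convex polygons $P_1,\ldots,P_N$ contained in $D$; for each $P_k$ the set-valued map $\Gamma_{P_k}(\vector a)\ce\set{-\floor{r\vector a}\mid r\in P_k}$ takes finite values and is nearly single-valued when $P_k$ is small, so one builds the finite directed graph on $B$ — enlarged to a set forward invariant under $\Gamma_{P_k}$ — with an edge $\vector a\to\vector b$ whenever $\vector b\in\Gamma_{P_k}(\vector a)$, and checks that its only cycle is the loop at $\vector0$; if so, $P_k\subseteq\GN{1}$. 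This is the complex analogue of the standard decision procedure for the finiteness property of one-dimensional real SRS.

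What makes this finite is precisely the cut at $D$: $\abs z\leq\frac{2047}{2048}<1$ keeps $B$ a fixed finite set, and since $\abs{P_5(n)}=n/\sqrt{n^2+1}\to1$ (more generally every vertex $P_i(n)\to(1,0)$), only finitely many pikes meet $D$, so $\GC\cap D$ is compact and bounded away from the unit circle and can be exhausted by finitely many $P_k$. I expect the main obstacle, in both parts, to be the behaviour near $P_0(1)=(1,0)$: there the contraction degenerates, infinitely many pikes with unboundedly long associated cycles cluster together, and the finite reductions above break down — which is exactly why the theorem stops at $D$ and why the remaining ``areas of uncertainty'' sit just outside it. Concretely, in (i) the hard part is the uniform-in-$n$ construction of the witness cycles and the tessellation claim (c); in (ii) it is choosing the cover $P_1,\ldots,P_N$ fine enough near each pike that the transition graphs are genuinely acyclic, which succeeds only because the curves $P_i(n)$ are exactly the loci along which nontrivial cycles are born.
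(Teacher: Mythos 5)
Your overall strategy coincides with the paper's: part (i) is proved there exactly as you outline, by exhibiting explicit cycles whose cutout polygons $P(\pi)=R_{\vector{c}}$ (half-open convex polygons arising from the conditions $rc_j+c_{j+1}\in[0,1)^2$) tile the exterior of $\GC$ inside the unit disk, with the half-open edges matching the solid/dotted boundary description of $\GC$; part (ii) is a finite, computer-assisted verification over a polygonal decomposition of $\GC\cap D$. Your reduction of (ii) to a finite absorbing set $B$ via $\abs{\gamma_r(\vector{a})}<\abs{r}\abs{\vector{a}}+\sqrt2$ is sound, though the paper instead uses the witness-graph polyhedra $P_{\vector{r}}$ of the earlier algorithmic work, which tile $\GN{1}$ exactly and make the computation far smaller than a transition-graph check over all of $B$; this is a difference of efficiency, not of logic.

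The genuine gap is in your step (c) for part (i), which is where all the content of the paper's proof lives. You posit ten single-parameter families of cycles, one attached to each boundary arc $P_i(\cdot)$, plus finitely many sporadic cycles. That is quantitatively wrong and the covering would fail: the region outside $\GC$ in the sector between consecutive pikes is not swept out by one polygon per arc. The paper requires \emph{nineteen} two-parameter families $C_i(n,m)$ (with $m$ ranging over an interval of length roughly $n/5$ to $n/3$ depending on $n$), together with fourteen sporadic cycles, and the cycles themselves are built from about fifteen auxiliary affine building blocks repeated $k$ times over index sets depending on $n$ and $m$ — so their lengths grow linearly in $n$, not with a fixed eventually periodic pattern per arc. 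The covering claim is then verified sector by sector ($\frac1n<\arctan\phi\leq\frac1{n-1}$) by comparing the closed-form vertex coordinates of the resulting polygons, checking that dotted edges of one group meet solid edges of another and that isolated missing vertices are supplied by other polygons. Without producing these families (found in the paper by extensive manual search) and carrying out that comparison, the proof of (i) is a plan rather than a proof; the same applies, to a lesser degree, to the unspecified computation in (ii).
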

The first part is proven in Section~\ref{SInclusion1} and the second part in Section~\ref{SInclusion2}.

\section{Preliminaries}
\label{SPreliminaries}
For $n \in \N$, $\pi = (\vector{a}_1, \dots \vector{a}_n) \in (\Zi^d)^n$ is called a {\em cycle of $\vector{r}\in\C^d$} (or $\gamma_\vector{r}$, see footnote~\ref{FN_identify}) iff for all $i \in \set{1, \ldots, n}$ it holds that $\gamma_\vector{r}(\vector{a}_i) = \vector{a}_{i \bmod n + 1}$ (note that $\bmod$ has precedence over $+$ and $-$), a {\em cycle} iff there is a vector $\vector{r} \in \C^d$ for which $\pi$ is a cycle of $\vector{r}$, and {\em nontrivial} iff $\pi \neq (\vector{0})$, the {\em trivial cycle}. Let $P(\pi) \ce \set{\vector{r} \in \C^d \mid \pi \text{ cycle of } \vector{r}}$, the {\em associated polyhedron of $\pi$} or - if $\pi$ is a nontrivial cycle - the {\em cutout polyhedron of $\pi$}. $P(\pi)$ is either empty or the intersection of finitely many half spaces and therefore it does in fact always form a - possibly degenerate - convex polyhedron \cite{BrunotteKirschenhoferThuswaldner11}. It is clear that
\begin{equation}
\GN{d} = \G{d} \setminus \bigcup_{\pi \neq (\vector{0})} P(\pi)
\end{equation}
which provides a method to ``cut out'' regions (the cutout polyhedra) from $\G{d}$ \cite{ABBPTI}.

Cutout polyhedra can be used to prove that a given parameter $\vector{r}\in\C^d$ does not belong to $\GN{d}$. However they are insufficient to prove that it does belong. For that Brunotte's algorithm can be used (\cite[Theorem~5.1]{ABBPTI}) which is based on sets of witnesses. A set $V \subseteq \Zi^d$ is called a {\em set of witnesses for $\vector{r}$} iff it is stable under $\gamma_\vector{r}^{(1)}\ce\gamma_\vector{r}$, $\gamma_\vector{r}^{(2)}\ce-\gamma_\vector{r} \circ (-\id)$, $\gamma_\vector{r}^{(3)}\ce\operatorname{conj}\circ\gamma_\vector{r} \circ \operatorname{conj}\circ\id$, and $\gamma_\vector{r}^{(4)}\ce-\operatorname{conj}\circ\gamma_\vector{r} \circ (-\operatorname{conj}\circ\id)$ (where $\id$ is the identity on $\Zi$ and $\operatorname{conj}$ is the function on $\C^d$ which replaces every entry of the input vector by its complex conjugate) and contains a generating set of the group $(\Zi^d, +)$ which is closed under taking inverses. Every such set of witnesses has the decisive property
\begin{equation}
\vector{r} \in \GN{d} \Leftrightarrow \fa \vector{a} \in V: \ex n \in \N: \gamma_\vector{r}^n(\vector{a}) = \vector{0}.
\end{equation}
In the case of a finite set of witnesses this provides a method to decide whether or not a given parameter $\vector{r}$ belongs to $\GN{d}$. This is what Brunotte's algorithm does for any given parameter $\vector{r}$ in the interior of $\G{d}$ - it finds a finite set of witnesses. It shall be denoted by $V_\vector{r}$ - the {\em set of witnesses associated with $\vector{r}$} - and can be computed using the following iteration:
\begin{equation}
\begin{split}
V_0&\ce \{(\pm 1,0,\ldots,0),\ldots,(0,\ldots,0,\pm 1),\\
&\phantom{\vphantom{a}\ce\vphantom{a}\{}(\pm \I,0,\ldots,0),\ldots,(0,\ldots,0,\pm \I)\}\\
\fa n\in\N: V_n&\ce V_{n-1} \cup \gamma_\vector{r}^{(1)}(V_{n-1}) \cup \ldots \cup \gamma_\vector{r}^{(4)}(V_{n-1})\\
V_\vector{r}&\ce \displaystyle \bigcup_{n \in \N_0} V_n
\end{split}
\end{equation}
If $\vector{r}$ is an element of the interior of $\G{d}$ the mappings $\gamma_\vector{r}^{(i)}$, $i\in\set{1,\ldots,4}$ are contractive apart from a finite subset of $\Zi$. Therefore the above iteration becomes stationary eventually \cite{ABBPTI}. Let $\Pi_\vector{r}$ - the {\em graph of witnesses associated with $\vector{r}$} - denote the edge-colored multidigraph with vertex set $V_\vector{r}$ having an edge of color $i$ from $\vector{a}$ to $\vector{b}$ iff $\gamma_\vector{r}^{(i)}(\vector{a}) = \vector{b}$. If $E_i$ is the set of all edges (ordered pairs) of color $i$ then the graph $\Pi_\vector{r}$ is completely characterized by $(E_1,\ldots,E_4) \in \powerset((\Zi^d)^2)^4$\footnote{$\powerset(M)$ denotes the power set of a set $M$.} (as there are no isolated vertices) and thus the graph and the $4$-tuple can be identified. For any such graph $\Pi = (E_1,\ldots,E_4) \in \powerset((\Zi^d)^2)^4$ let - just as for cycles - $P(\Pi) \ce \set{\vector{r} \in \R^d \mid \fa i\in\set{1,\ldots,4}:\fa (\vector{a},\vector{b}) \in E_i: \gamma_\vector{r}^{(i)}(\vector{a})=\vector{b}}$ and $P_\vector{r} \ce P(\Pi_\vector{r})$. If $\vector{r} \in \Int{\G{d}}$ then $\Pi_\vector{r}$ is finite and $P_\vector{r}$ is a convex polyhedron. Furthermore $\GN{d}$ is the disjoint union of those $P_\vector{r}$ the corresponding parameters $\vector{r}$ of which belong to $\GN{d}$ \cite{Weitzer15}. The algorithms introduced in \cite{Weitzer15} are based on this fact and can easily be adapted to the complex case. The results presented in Section~\ref{SInclusion2} have been achieved in this way.

For further considerations it should be noted that the Loudspeaker is symmetric with respect to the real axis \cite{BrunotteKirschenhoferThuswaldner11}.

\section{One inclusion}
\label{SInclusion1}
\noindent
\begin{tabbing}
In the following we define 19 infinite families of cycles the corresponding cutout\\
polygons of which will cut out the whole region outside $\GC$. Let
\end{tabbing}
\begingroup
\fontsize{6pt}{6pt}\selectfont
\begin{ntabbing}
$\alpha(a,b)$\=\ce\kill
$\iota(a,b)$\>$\ce(n+3k+a,-n-3m+3k+b)$\qquad\=$\alpha(a,b)$\=\kill
$\alpha(a,b)$\>$\ce(n+a,-3m+b)$\>$\beta(a,b)$\>$\ce(n+m+a,b)$\\
$\gamma(a,b)$\>$\ce(n+m-k+a,3k+b)$\>$\delta(a,b)$\>$\ce(n+3m+k+a,-n+3k+b)$\\
$\varepsilon(a,b)$\>$\ce(n+3m-k+a,3k+b)$\>$\zeta(a,b)$\>$\ce(n+3m-3k+a,n+3k+b)$\\
$\eta(a,b)$\>$\ce(n+k+a,-3m+3k+b)$\>$\vartheta(a,b)$\>$\ce(n-k+a,3m+k+b)$\\
$\iota(a,b)$\>$\ce(n+3k+a,-n-3m+3k+b)$\>$\kappa(a,b)$\>$\ce(n-3k+a,n+3m+k+b)$\\
$\lambda(a,b)$\>$\ce(4n/3+3m+a,b)$\>$\mu(a,b)$\>$\ce(4n/3+3m-k+a,3k+b)$\\
$\nu(a,b)$\>$\ce(3m+a,n+b)$\>$\xi(a,b)$\>$\ce(3m+k+a,-n+k+b)$\\
$\varrho(a,b)$\>$\ce(3m-3k+a,n+k+b)$\>$\sigma(a,b)$\>$\ce(3k+a,-n-m+k+b)$\\
$\tau(a,b)$\>$\ce(3k+a,-4n/3-3m+k+b)$\\
\\
$\Gamma(a)\ce\set{1,\ldots,(n+a)/3}\qquad\Sigma(a)\ce\set{1,\ldots,n-3m-a}\qquad\Theta(a)\ce\set{1,\ldots,m+a}$\\
\end{ntabbing}
\begin{ntabbing}
\= $\mathbf{C_0(10)} \ce$ \= $($\=\kill
\>$\mathbf{C_0(1)} \ce$ \> $((-2, 0), (2, 2), (0, -2), (-1, 2), (2, 0), (-1, -1), (0, 2), (2, -1))$\\
\>$\mathbf{C_0(2)} \ce$ \> $((-3, 0), (3, 2), (-1, -2), (1, 3), (1, -3), (-2, 3), (3, -1))$\\
\>$\mathbf{C_0(3)} \ce$ \> $((-4, 0), (4, 2), (-3, -3), (2, 4), (0, -4), (-1, 4), (3, -3), (-3, 2), (4, -1))$\\
\> $\mathbf{C_0(4)} \ce$ \> $((-3,0),(3,3),(0,-4),(-2,3),(4,0),(-2,-2),(0,3),(3,-2))$\\
\>$\mathbf{C_0(5)} \ce$ \> $((-3,0),(3,3),(0,-4),(-2,3),(4,0),(-2,-2),(1,3),(2,-2),(-2,1),(3,1),(-1,-2),$\\
\>\>\>$(0,3),(3,-2))$\\
\>$\mathbf{C_0(6)} \ce$ \> $((-3,-1),(3,3),(-1,-3),(0,4),(2,-3),(-3,2),(4,0))$\\
\>$\mathbf{C_0(7}) \ce$ \> $((-4,-2),(3,4),(0,-4),(-1,4),(3,-3),(-4,2),(5,0))$\\
\>$\mathbf{C_0(8)} \ce$ \> $((-5,-1),(5,3),(-3,-4),(2,5),(0,-5),(-1,5),(3,-4),(-4,3),(5,-1),(-5,0),(5,2),$\\
\>\>\>$(-4,-3),(3,5),(-1,-5),(0,6),(2,-5),(-3,5),(5,-3),(-5,2),(6,0))$\\
\>$\mathbf{C_0(9)} \ce$ \> $((-5,0),(5,2),(-4,-3),(3,5),(-1,-5),(0,5),(2,-4),(-3,4),(5,-2),(-5,1),(5,1),$\\
\>\>\>$(-4,-2),(4,4),(-2,-4),(1,5),(1,-5),(-2,5),(4,-4),(-4,3),(5,-1))$\\
\>$\mathbf{C_0(10)} \ce$ \> $((-15,-5),(13,10),(-9,-13),(5,15),(0,-15),(-4,15),(9,-12),(-12,9),(15,-4),(-15,0),$\\
\>\>\>$(15,5),(-12,-9),(9,13),(-4,-15),(0,16),(5,-15),(-9,13),(13,-9),(-15,5),(16,0))$\\
\>$\mathbf{C_0(11)} \ce$ \> $((-4,0),(4,2),(-3,-2),(3,3),(-2,-3),(2,4),(0,-4),(-1,4),(3,-3),(-3,2),(4,-1))$\\
\>$\mathbf{C_0(12)} \ce$ \> $((-7,0),(7,2),(-6,-3),(6,5),(-4,-5),(3,6),(-1,-6),(0,7),(2,-6),(-3,6),(5,-5),$\\
\>\>\>$(-5,4),(6,-2),(-6,1),(7,1),(-6,-2),(6,4),(-5,-5),(4,6),(-2,-6),(1,7),(1,-7),$\\
\>\>\>$(-2,7),(4,-6),(-5,6),(6,-4),(-6,3),(7,-1))$\\
\>$\mathbf{C_0(13)} \ce$ \> $((-7,-1),(7,3),(-6,-4),(6,6),(-4,-6),(3,7),(-1,-7),(0,8),(2,-7),(-3,7),(5,-6),$\\
\>\>\>$(-6,5),(7,-3),(-7,2),(8,0))$\\
\>$\mathbf{C_0(14)} \ce$ \> $((-10,0),(10,2),(-9,-3),(9,5),(-7,-6),(6,8),(-4,-8),(3,9),(-1,-9),(0,10),(2,-9),$\\
\>\>\>$(-3,9),(5,-8),(-6,7),(8,-5),(-8,4),(9,-2),(-9,1),(10,1),(-9,-2),(9,4),(-8,-5),$\\
\>\>\>$(7,7),(-5,-8),(4,9),(-2,-9),(1,10),(1,-10),(-2,10),(4,-9),(-5,9),(7,-7),(-8,6),$\\
\>\>\>$(9,-4),(-9,3),(10,-1))$
\end{ntabbing}
\begin{ntabbing}
$\quad$ \= \kill
$\mathbf{C_1(n, m)}\ce
(-\beta(0,0))
(\gamma(1,-1), -\gamma(0,0))_{k\in\Theta(1)}
(\vartheta(0,3), -\vartheta(1,-3))_{k\in\Sigma(5)}
(\varrho(7,-2), -\varrho(-5,2))_{k\in\Theta(2)}$\\
$(-\sigma(1,1), \sigma(1,0))_{k\in\Theta(1)}
(-\xi(-3,0), \xi(4,1))_{k\in\Sigma(5)}
(-\eta(2,6), \eta(-1,-4))_{k\in\Theta(1)}$\\
for $n \geq 2 \quad\land\quad -1 \leq m \leq (n - 5) / 3$\\
$\mathbf{C_2(n, m)}\ce
(-\gamma(-1,1), \gamma(1,1))_{k\in\Theta(1)}
(-\vartheta(0,-4), \vartheta(0,5))_{k\in\Sigma(6)}
(-\varrho(-7,2), \varrho(6,-1))_{k\in\Theta(2)}$\\
$(\sigma(-1,-1), -\sigma(0,1))_{k\in\Theta(1)}
(\xi(4,0), -\xi(-4,0))_{k\in\Sigma(6)}
(\eta(-2,-8), -\eta(2,7))_{k\in\Theta(2)}
(\beta(1,1))$\\
for $n \geq 3 \quad\land\quad -1 \leq m \leq (n - 6) / 3$\\
$\mathbf{C_3(n, m)}\ce
(-\gamma(-1,3), \gamma(1,-1))_{k\in\Theta(1)}
(-\vartheta(0,-2), \vartheta(0,3))_{k\in\Sigma(4)}
(-\nu(-2,1), \nu(2,0))(\text{if }m = 0)$\\
$(-\nu(-2,1), \nu(2,0), -\nu(0,0), \nu(-1,1))(\text{if }m \neq 0)
(-\varrho(0,0), \varrho(-1,1))_{k\in\Theta(-1)}
(\sigma(-3,-1), -\sigma(2,1))_{k\in\Theta(1)}$\\
$(\xi(2,0), -\xi(-2,0))_{k\in\Sigma(4)}
(\alpha(-1,-3), -\alpha(1,2))(\text{if }m = 0)
(\alpha(-1,-3), -\alpha(1,2), \alpha(0,-1), -\alpha(0,0))(\text{if }m \neq 0)$\\
$(\eta(0,-1), -\eta(0,0))_{k\in\Theta(-1)}
(\beta(0,-1))$\\
for $n \geq 5 \quad\land\quad 0 \leq m \leq (n - 5) / 3$\\
$\mathbf{C_4(n, m)}\ce
(-\gamma(-1,2), \gamma(1,0))_{k\in\Theta(1)}
(-\vartheta(0,-3), \vartheta(0,4))_{k\in\Sigma(5)}
(-\nu(-3,1), \nu(3,0))$\\
$(-\varrho(-4,1), \varrho(3,0))_{k\in\Theta(1)}
(\sigma(-1,-1), -\sigma(0,1))_{k\in\Theta(1)}
(\xi(4,0), -\xi(-4,0))_{k\in\Sigma(6)}
(\alpha(-1,-5), -\alpha(1,4))$\\
$(\eta(-1,-6), -\eta(1,5))_{k\in\Theta(1)}
(\beta(1,0))$\\
for $n \geq 6 \quad\land\quad 0 \leq m \leq (n - 6) / 3$\\
$\mathbf{C_5(n, m)}\ce
(-\gamma(-1,3), \gamma(1,-1))_{k\in\Theta(1)}
(-\vartheta(0,-1), \vartheta(0,2))_{k\in\Sigma(2)}
(-\varrho(-3,1), \varrho(2,0))_{k\in\Theta(0)}$\\
$(\sigma(-3,-1), -\sigma(2,1))_{k\in\Theta(1)}
(\xi(1,0), -\xi(-1,0))_{k\in\Sigma(2)}
(\eta(-1,-4), -\eta(1,3))_{k\in\Theta(0)}
(\beta(0,-1))$\\
for $n \geq 2 \quad\land\quad 0 \leq m \leq (n - 2) / 3$\\
$\mathbf{C_6(n, m)}\ce
(-\gamma(-1,2), \gamma(1,0))_{k\in\Theta(1)}
(-\vartheta(0,-2), \vartheta(0,3))_{k\in\Sigma(3)}
(-\varrho(-4,1), \varrho(3,0))_{k\in\Theta(1)}$\\
$(\sigma(-1,-1), -\sigma(0,1))_{k\in\Theta(1)}
(\xi(3,0), -\xi(-3,0))_{k\in\Sigma(4)}
(\eta(-1,-6), -\eta(1,5))_{k\in\Theta(1)}
(\beta(1,0))$\\
for $n \geq 4 \quad\land\quad 0 \leq m \leq (n - 4) / 3$\\
$\mathbf{C_7(n, m)}\ce
(-\beta(0,1))
(\gamma(1,-2), -\gamma(0,1))_{k\in\Theta(1)}
(\vartheta(0,2), -\vartheta(1,-2))_{k\in\Sigma(3)}
(\varrho(5,-1), -\varrho(-3,1))_{k\in\Theta(1)}$\\
$(-\sigma(2,1), \sigma(0,0))_{k\in\Theta(1)}
(-\xi(-2,0), \xi(3,1))_{k\in\Sigma(3)}
(-\eta(1,4), \eta(0,-2))_{k\in\Theta(0)}$\\
for $n \geq 5 \quad\land\quad 0 \leq m \leq (n - 5) / 5$\\
$\mathbf{C_8(n, m)}\ce
(-\gamma(-1,2), \gamma(1,0))_{k\in\Theta(1)}
(-\vartheta(0,-3), \vartheta(0,4))_{k\in\Sigma(4)}
(-\varrho(-5,1), \varrho(4,0))_{k\in\Theta(1)}$\\
$(\sigma(-2,-1), -\sigma(1,1))_{k\in\Theta(1)}
(\xi(3,0), -\xi(-3,0))_{k\in\Sigma(4)}
(\eta(-1,-6), -\eta(1,5))_{k\in\Theta(1)}
(\beta(1,0))$\\
for $n \geq 1 \quad\land\quad (m = -1 \quad\lor\quad 0 \leq m \leq (n - 8) / 5 \quad\lor\quad m = (n - 4) / 3)$\\
$\mathbf{C_9(n, m)}\ce
(-\beta(0,0))
(\gamma(1,-1), -\gamma(0,0))_{k\in\Theta(0)}
(\vartheta(1,1), -\vartheta(0,-1))_{k\in\Sigma(3)}
(\nu(3,-1), -\nu(-1,1))$\\
$(\varrho(4,-1), -\varrho(-2,1))_{k\in\Theta(1)}
(-\sigma(1,1), \sigma(1,0))_{k\in\Theta(0)}
(-\xi(-1,1), \xi(2,0))_{k\in\Sigma(3)}
(-\alpha(1,2), \alpha(0,-1))$\\
$(-\eta(1,3), \eta(0,-1))_{k\in\Theta(0)}$\\
for $n \geq 4 \quad\land\quad (n - 4) / 5 \leq m \leq (n - 4) / 3$\\
$\mathbf{C_{10}(n, m)}\ce
(-\gamma(-1,1), \gamma(1,1))_{k\in\Theta(1)}
(-\vartheta(0,-3), \vartheta(0,4))_{k\in\Sigma(5)}
(-\nu(-3,1), \nu(3,0))$\\
$(-\varrho(-4,1), \varrho(3,0))_{k\in\Theta(1)}
(\sigma(-1,-1), -\sigma(0,1))_{k\in\Theta(1)}
(\xi(3,0), -\xi(-3,0))_{k\in\Sigma(5)}
(\alpha(-1,-4), -\alpha(1,3))$\\
$(\eta(-1,-5), -\eta(1,4))_{k\in\Theta(1)}
(\beta(1,1))$\\
for $n \geq 5 \quad\land\quad (n - 6) / 5 \leq m \leq (n - 5) / 3$\\
$\mathbf{C_{11}(n, m)}\ce
(-\beta(0,-2))
(\gamma(1,1), -\gamma(0,-2))_{k\in\Theta(0)}
(\vartheta(1,2), -\vartheta(0,-2))_{k\in\Sigma(4)}
(\nu(4,-1), -\nu(-2,1))$\\
$(\varrho(5,-1), -\varrho(-3,1))_{k\in\Theta(1)}
(-\sigma(2,1), \sigma(0,0))_{k\in\Theta(1)}
(-\xi(-2,0), \xi(3,1))_{k\in\Sigma(3)}
(-\eta(1,4), \eta(0,-2))_{k\in\Theta(1)}$\\
for $n \geq 4 \quad\land\quad (n - 5) / 5 \leq m \leq (n - 4) / 3$\\
$\mathbf{C_{12}(n, m)}\ce
(-\gamma(-1,1), \gamma(1,1))_{k\in\Theta(1)}
(-\vartheta(0,-3), \vartheta(0,4))_{k\in\Sigma(5)}
(-\nu(-3,1), \nu(3,0))$\\
$(-\varrho(-4,1), \varrho(3,0))_{k\in\Theta(1)}
(\sigma(-1,-1), -\sigma(0,1))_{k\in\Theta(1)}
(\xi(4,0), -\xi(-4,0))_{k\in\Sigma(6)}$\\
$(\eta(-2,-8), -\eta(2,7))_{k\in\Theta(2)}
(\beta(1,1))$\\
for $n \geq 6 \quad\land\quad (n - 7) / 5 \leq m \leq (n - 6) / 3$\\
$\mathbf{C_{13}(n, m)}\ce
(-\gamma(-1,3), \gamma(1,-1))_{k\in\Theta(1)}
(-\vartheta(0,-1), \vartheta(0,2))_{k\in\Sigma(3)}
(-\nu(-1,1), \nu(1,0))$\\
$(-\varrho(-2,1), \varrho(1,0))_{k\in\Theta(0)}
(\sigma(-2,-1), -\sigma(1,1))_{k\in\Theta(0)}
(\xi(0,-1), -\xi(0,1))_{k\in\Sigma(1)}$\\
$(\eta(-1,-4), -\eta(1,3))_{k\in\Theta(0)}
(\beta(0,-1))$\\
for $n \geq 3 \quad\land\quad (n - 3) / 5 \leq m \leq (n - 3) / 3$\\
$\mathbf{C_{14}(n, m)}\ce
(-\gamma(-1,2), \gamma(1,0))_{k\in\Theta(1)}
(-\vartheta(0,-3), \vartheta(0,4))_{k\in\Sigma(4)}
(-\varrho(-5,1), \varrho(4,0))_{k\in\Theta(1)}$\\
$(\sigma(-2,-2), -\sigma(1,2))_{k\in\Theta(1)}
(\xi(3,-1), -\xi(-3,1))_{k\in\Sigma(5)}
(\alpha(-1,-5), -\alpha(1,4))$\\
$(\eta(-1,-6), -\eta(1,5))_{k\in\Theta(1)}
(\beta(1,0))$\\
for $n \geq 2 \quad\land\quad (n - 7) / 5 \leq m \leq (n - 5) / 3$\\
$\mathbf{C_{15}(n, m)}\ce
(-\gamma(-1,2), \gamma(1,0))_{k\in\Theta(1)}
(-\vartheta(0,-3), \vartheta(0,4))_{k\in\Sigma(5)}
(-\nu(-3,1), \nu(3,0))$\\
$(-\varrho(-4,1), \varrho(3,0))_{k\in\Theta(1)}
(\sigma(-1,-1), -\sigma(0,1))_{k\in\Theta(1)}
(\xi(3,0), -\xi(-3,0))_{k\in\Sigma(4)}$\\
$(\eta(-1,-6), -\eta(1,5))_{k\in\Theta(1)}
(\beta(1,0))$\\
for $n \geq 4 \quad\land\quad ((n - 7) / 5 \leq m \leq (n - 6) / 3 \quad\lor\quad m = (n - 4) / 3)$\\
$\mathbf{C_{16}(n, m)}\ce
(-\gamma(-1,3), \gamma(1,-1))_{k\in\Theta(1)}
(-\vartheta(0,-1), \vartheta(0,2))_{k\in\Sigma(2)}
(-\varrho(-3,1), \varrho(2,0))_{k\in\Theta(0)}$\\
$(\sigma(-3,-1), -\sigma(2,1))_{k\in\Theta(1)}
(\xi(2,0), -\xi(-2,0))_{k\in\Sigma(4)}
(\alpha(-1,-3), -\alpha(1,2), \alpha(0,-1), -\alpha(0,0))$\\
$(\eta(0,-1), -\eta(0,0))_{k\in\Theta(-1)}
(\beta(0,-1))$\\
for $n \geq 7 \quad\land\quad (n - 4) / 5 \leq m \leq (n - 4) / 3$\\
$\mathbf{C_{17}(n, m)}\ce
(-\gamma(-1,1), \gamma(1,1))_{k\in\Theta(0)}
(-\vartheta(-1,-1), \vartheta(1,2))_{k\in\Sigma(2)}
(-\varrho(-4,1), \varrho(3,0))_{k\in\Theta(1)}$\\
$(\sigma(-1,-1), -\sigma(0,1))_{k\in\Theta(1)}
(\xi(3,0), -\xi(-3,0))_{k\in\Sigma(5)}
(\alpha(-1,-4), -\alpha(1,3))$\\
$(\eta(-1,-5), -\eta(1,4))_{k\in\Theta(1)}
(\beta(1,1))$\\
for $n \geq 5 \quad\land\quad (n - 5) / 5 \leq m \leq (n - 5) / 3$\\
$\mathbf{C_{18}(n, m)}\ce
(-\gamma(-1,3), \gamma(1,-1))_{k\in\Theta(1)}
(-\vartheta(0,-2), \vartheta(0,3))_{k\in\Sigma(4)}
(-\varrho(-5,2), \varrho(4,-1))_{k\in\Theta(1)}$\\
$(\sigma(-2,-1), -\sigma(1,1))_{k\in\Theta(1)}
(\xi(2,0), -\xi(-2,0))_{k\in\Sigma(4)}
(\alpha(-1,-3), -\alpha(1,2))$\\
$(\eta(-1,-4), -\eta(1,3))_{k\in\Theta(0)}
(\beta(0,-1))$\\
for $n \geq 8 \quad\land\quad (n - 4) / 5 \leq m \leq (n - 5) / 3$\\
$\mathbf{C_{19}(n, m)}\ce
(-\mu(0,1), \mu(0,1))_{k\in\Gamma(-3)}
(-\zeta(-3,4), \zeta(2,-2))_{k\in\Theta(0)}
(-\kappa(-3,2), \kappa(2,-1))_{k\in\Gamma(0)}$\\
$(\tau(-3,0), -\tau(2,0))_{k\in\Gamma(0)}
(\iota(-3,-2), -\iota(2,1))_{k\in\Theta(0)}
(\delta(-1,-2), -\delta(1,1))_{k\in\Gamma(0)}
(\lambda(0,1))$\\
for $n \geq 3 \quad\land\quad n \equiv 0 \pmod{3} \quad\land\quad (m = 0 \quad\lor\quad 1 \leq m \leq (n - 3 / 2) \cdot 2 / 9)$\\
$(-\varepsilon(0,1), \mu(2/3,1))_{k\in\Gamma(-1)}
(-\zeta(-2,2), \zeta(1,0))_{k\in\Theta(0)}
(-\kappa(-2,0), \kappa(1,1))_{k\in\Gamma(-1)}$\\
$
(\tau(-3,-5/3), -\tau(-2,-5/3))_{k\in\Gamma(-1)}
(\iota(-4,-4), -\iota(3,3))_{k\in\Theta(1)}
(\delta(0,-1), -\delta(0,0))_{k\in\Gamma(-1)}
(\lambda(2/3,1))$\\
for $n \geq 1 \quad\land\quad n \equiv 1 \pmod{3} \quad\land\quad (m = 0 \quad\lor\quad 1 \leq m \leq (n - 5 / 2 ) \cdot 2 / 9)$\\
$(-\mu(7/3,-3), \mu(7/3,-1))_{k\in\Gamma(1)}
(-\zeta(-3,2), \zeta(2,0))_{k\in\Theta(0)}
(-\kappa(-3,0), \kappa(2,1))_{k\in\Gamma(1)}$\\
$
(\tau(-2,-7/3), -\tau(-1,-7/3))_{k\in\Gamma(-2)}
(\iota(-4,-5), -\iota(3,4))_{k\in\Theta(1)}
(\delta(1,-2), -\delta(-1,1))_{k\in\Gamma(-2)}
(\lambda(4/3,-1))$\\
for $n \geq 5 \quad\land\quad n \equiv 2 \pmod{3} \quad\land\quad (m = 0 \quad\lor\quad 1 \leq m \leq (n - 7 / 2) \cdot 2 / 9)$
\end{ntabbing}
\endgroup
\phantom{a}
\noindent
By Lemma~5.1 of \cite{Weitzer15} one can compute the corresponding cutout polygons.

\phantom{a}
\begingroup
\fontsize{6pt}{9pt}\selectfont
\begin{ntabbing}
$\mathbf{C_0(n)}:$\\
$n=11: \quad$ \= \kill
$n=1:$ \> $\overline{(\frac{2}{3}, \frac{2}{3})}$\\
$n=2:$ \> $(\frac{12}{13}, \frac{5}{13})\MDots(\frac{6}{7}, \frac{3}{7})\MLine\overline{(\frac{7}{8}, \frac{3}{8})}\MLine(\frac{10}{11}, \frac{4}{11})\MDots$\\
$n=3:$ \> $(1, \frac{1}{3})\MLine\overline{(\frac{13}{14}, \frac{2}{7})}\MLine\overline{(\frac{17}{18}, \frac{5}{18})}\MLine$\\
$n=4:$ \> $(\frac{3}{4},\frac{3}{4})\MLine(\frac{2}{3},\frac{2}{3})\MLine$\\
$n=5:$ \> $(\frac{3}{4},\frac{2}{3})\MDots(\frac{3}{4},\frac{3}{4})\MDots(\frac{2}{3},\frac{2}{3})\MDots$\\
$n=6:$ \> $(\frac{14}{15},\frac{2}{5})\MDots(\frac{5}{6},\frac{1}{2})\MDots(\frac{6}{7},\frac{3}{7})\MLine\overline{(\frac{12}{13},\frac{5}{13})}\MLine$\\
$n=7:$ \> $(\frac{23}{25},\frac{11}{25})\MDots(\frac{10}{11},\frac{5}{11})\MLine\overline{(\frac{8}{9},\frac{4}{9})}\MLine(\frac{19}{21},\frac{3}{7})\MDots$\\
$n=8:$ \> $(1,\frac{1}{3})\MDots(\frac{14}{15},\frac{1}{3})\MDots(\frac{16}{17},\frac{5}{17})\MDots(\frac{24}{25},\frac{7}{25})\MLine$\\
$n=9:$ \> $(\frac{16}{17},\frac{5}{17})\MLine(\frac{15}{16},\frac{5}{16})\MLine$\\
$n=10:$ \> $\overline{(\frac{15}{16},\frac{5}{16})}$\\
$n=11:$ \> $(\frac{17}{18},\frac{5}{18})\MLine(\frac{14}{15},\frac{4}{15})\MLine$\\
$n=12:$ \> $(\frac{48}{49},\frac{9}{49})\MDots(\frac{36}{37},\frac{7}{37})\MLine\overline{(\frac{37}{38},\frac{7}{38})}\MLine(\frac{43}{44},\frac{2}{11})\MDots$\\
$n=13:$ \> $(\frac{65}{66},\frac{2}{11})\MDots(\frac{35}{36},\frac{7}{36})\MDots(\frac{36}{37},\frac{7}{37})\MLine\overline{(\frac{60}{61},\frac{11}{61})}\MLine$\\
$n=14:$ \> $(\frac{87}{88},\frac{2}{11})\MDots(\frac{51}{52},\frac{5}{26})\MLine\overline{(\frac{57}{58},\frac{5}{29})}\MLine$
\end{ntabbing}
\begin{ntabbing}
$\quad$ \= \kill
$\mathbf{C_1(n,m)}:$\\
$n=2\land m=-1:(1,1)\MLine(0,1)\MLine(\frac{1}{2},\frac{1}{2})\MDots$\\
$n=3\land m=-1:(1,\frac{1}{2})\MLine(\frac{3}{4},\frac{1}{2})\MLine(\frac{4}{5},\frac{2}{5})\MDots$\\
$n\geq 4\land m=-1:(1,\frac{1}{n-1})\MLine\overline{(1-\frac{1}{n^2-2 n+1},\frac{n-1}{n^2-2 n+1})}\MLine(1-\frac{1}{n^2-2 n+2},\frac{n-1}{n^2-2 n+2})\MDots$\\
$n=5\land m=0:(1,\frac{1}{4})\MLine(\frac{24}{25},\frac{7}{25})\MDots(\frac{18}{19},\frac{5}{19})\MLine\overline{(\frac{21}{22},\frac{5}{22})}\MLine$\\
$n\geq 9\land 0\leq m\leq \frac{n-9}{5}:(1,\frac{1}{n-1})\MLine\overline{(1-\frac{1}{n^2-n+n m-4 m-3},\frac{n+m}{n^2-n+n m-4 m-3})}\MLine$ \\\> $\overline{(1-\frac{1}{n^2-n+n m+2 m+2},\frac{n+m}{n^2-n+n m+2 m+2})}\MLine$\\
$n\geq 6\land \frac{n-8}{5}\leq m\leq \frac{n-5}{5}:(1,\frac{1}{n-1})\MLine(1-\frac{1}{8 n+6 n m-9 m-11},\frac{6 m+8}{8 n+6 n m-9 m-11})\MDots$ \\\> $(1-\frac{1}{n^2-2 n+n m+m+5},\frac{n+m}{n^2-2 n+n m+m+5})\MLine\overline{(1-\frac{1}{n^2-n+n m+2 m+2},\frac{n+m}{n^2-n+n m+2 m+2})}\MLine$\\
$n\geq 9\land \frac{n-4}{5}\leq m\leq \frac{n-6}{3}:(1,\frac{1}{n-1})\MLine(1-\frac{1}{8 n+6 n m-9 m-11},\frac{6 m+8}{8 n+6 n m-9 m-11})\MDots$ \\\> $(1-\frac{1}{n^2-2 n+n m+m+5},\frac{n+m}{n^2-2 n+n m+m+5})\MLine(1-\frac{1}{n^2+n m-3 m-2},\frac{n+m}{n^2+n m-3 m-2})\MDots$ \\\> $(1-\frac{1}{4 n+6 n m-3 m-2},\frac{6 m+4}{4 n+6 n m-3 m-2})\MLine$\\
$n\geq 8\land m=\frac{n-5}{3}:(1,\frac{1}{n-1})\MLine(1-\frac{1}{2 n^2-6 n+5},\frac{2 n-3}{2 n^2-6 n+5})\MDots(1-\frac{3}{4 n^2-10 n+7},\frac{4 n-5}{4 n^2-10 n+7})\MLine$\\\>$(1-\frac{3}{4 n^2-8 n+9},\frac{4 n-5}{4 n^2-8 n+9})\MDots(1-\frac{1}{2 n^2-7 n+3},\frac{2 n-6}{2 n^2-7 n+3})\MLine$
\\
$\mathbf{C_2(n,m)}:$\\
$n=3\land m=-1:(\frac{7}{8},\frac{5}{8})\MDots(\frac{5}{6},\frac{2}{3})\MLine(\frac{2}{3},\frac{2}{3})\MDots(\frac{4}{5},\frac{2}{5})\MDots$\\
$n=4\land m=-1:(1,\frac{1}{3})\MDots(\frac{12}{13},\frac{5}{13})\MDots(\frac{8}{9},\frac{1}{3})\MDots(\frac{9}{10},\frac{3}{10})\MDots$\\
$n\geq 7\land -1\leq m\leq \frac{n-12}{5}:(1,\frac{1}{n-1})\MDots(1-\frac{1}{n^2-n+n m-4 m-5},\frac{n+m}{n^2-n+n m-4 m-5})\MDots$ \\\> $(1-\frac{1}{n^2-n+n m+2 m+4},\frac{n+m}{n^2-n+n m+2 m+4})\MDots$\\
$n\geq 5\land \frac{n-11}{5}\leq m\leq \frac{n-8}{5}:(1,\frac{1}{n-1})\MDots(1-\frac{1}{12 n+6 n m-9 m-17},\frac{6 m+12}{12 n+6 n m-9 m-17})\MLine$ \\\> $(1-\frac{1}{n^2-2 n+n m+m+7},\frac{n+m}{n^2-2 n+n m+m+7})\MDots(1-\frac{1}{n^2-n+n m+2 m+4},\frac{n+m}{n^2-n+n m+2 m+4})\MDots$\\
$n\geq 11\land \frac{n-7}{5}\leq m\leq \frac{n-8}{3}:(1,\frac{1}{n-1})\MDots(1-\frac{1}{12 n+6 n m-9 m-17},\frac{6 m+12}{12 n+6 n m-9 m-17})\MLine$ \\\> $(1-\frac{1}{n^2-2 n+n m+m+7},\frac{n+m}{n^2-2 n+n m+m+7})\MDots(1-\frac{1}{n^2+n m-3 m-4},\frac{n+m}{n^2+n m-3 m-4})\MLine$ \\\> $(1-\frac{1}{8 n+6 n m-3 m-4},\frac{6 m+8}{8 n+6 n m-3 m-4})\MDots$\\
$n\geq 7\land m=\frac{n-7}{3}:(1,\frac{1}{n-1})\MDots(1-\frac{1}{2 n^2-6 n+5},\frac{2 n-3}{2 n^2-6 n+5})\MDots(1-\frac{3}{4 n^2-12 n+11},\frac{4 n-7}{4 n^2-12 n+11})\MDots$\\
\> $(1-\frac{3}{4 n^2-10 n+9},\frac{4 n-7}{4 n^2-10 n+9})\MLine(1-\frac{1}{2 n^2-7 n+3},\frac{2 n-6}{2 n^2-7 n+3})\MDots$\\
$n\geq 6\land m=\frac{n-6}{3}:(1-\frac{1}{2 n^2-4 n+2},\frac{2 n-1}{2 n^2-4 n+2})\MDots(1-\frac{3}{4 n^2-13 n+9},\frac{4 n-6}{4 n^2-13 n+9})\MDots$ \\\> $(1-\frac{3}{4 n^2-11 n+12},\frac{4 n-6}{4 n^2-11 n+12})\MDots$
\\
$\mathbf{C_3(n,m)}:$\\
$n\geq 5\land 0\leq m\leq \frac{n-5}{5}:(1,\frac{1}{n-1})\MDots(1-\frac{1}{n^2-n+n m+2 m+2},\frac{n+m}{n^2-n+n m+2 m+2})\MLine$ \\\> $\overline{(1-\frac{1}{n^2-n+n m+2 m+3},\frac{n+m}{n^2-n+n m+2 m+3})}\MLine$\\
$n\geq 8\land \frac{n-4}{5}\leq m\leq \frac{n-5}{3}:(1,\frac{1}{n-1})\MDots(1-\frac{1}{4 n+6 n m-3 m-2},\frac{6 m+4}{4 n+6 n m-3 m-2})\MLine$ \\\> $\overline{(1-\frac{1}{5 n+6 n m-3 m-2},\frac{6 m+5}{5 n+6 n m-3 m-2})}\MLine$
\\
$\mathbf{C_4(n,m)}:$\\
$n\geq 8\land 0\leq m\leq \frac{n-8}{5}:(1,\frac{1}{n-1})\MLine(1-\frac{1}{n^2-n+n m+2 m+4},\frac{n+m}{n^2-n+n m+2 m+4})\MDots$ \\\> $(1-\frac{1}{n^2-n+n m+2 m+5},\frac{n+m}{n^2-n+n m+2 m+5})\MDots$\\
$n\geq 6\land \frac{n-7}{5}\leq m\leq \frac{n-6}{3}:(1,\frac{1}{n-1})\MLine(1-\frac{1}{7 n+6 n m-3 m-3},\frac{6 m+7}{7 n+6 n m-3 m-3})\MDots$ \\\> $(1-\frac{1}{8 n+6 n m-3 m-3},\frac{6 m+8}{8 n+6 n m-3 m-3})\MDots$
\\
$\mathbf{C_5(n,m)}:$\\
$2\leq n\leq 3\land m=0:(1,\frac{1}{n})\MLine\overline{(1-\frac{1}{n^2+n-1},\frac{n+1}{n^2+n-1})}\MLine(1-\frac{1}{n^2},\frac{n}{n^2})\MDots$\\
$n\geq 4\land m=0:(1,\frac{1}{n})\MLine\overline{(1-\frac{1}{n^2-1},\frac{n}{n^2-1})}\MLine(1-\frac{1}{n^2},\frac{n}{n^2})\MDots$\\
$n\geq 9\land 1\leq m\leq \frac{n-4}{5}:(1,\frac{1}{n})\MLine\overline{(1-\frac{1}{n^2+n m-3 m-1},\frac{n+m}{n^2+n m-3 m-1})}\MLine$ \\\> $(1-\frac{1}{n^2+n m-3 m},\frac{n+m}{n^2+n m-3 m})\MDots$\\
$n\geq 6\land \frac{n-3}{5}\leq m\leq \frac{n-3}{3}:(1,\frac{1}{n})\MLine\overline{(1-\frac{1}{4 n+6 n m-3 m-1},\frac{6 m+4}{4 n+6 n m-3 m-1})}\MLine$ \\\> $(1-\frac{1}{3 n+6 n m-3 m},\frac{6 m+3}{3 n+6 n m-3 m})\MDots$\\
$n\geq 5\land m=\frac{n-2}{3}:(1,\frac{1}{n})\MLine\overline{(1-\frac{1}{2 n^2-2 n+1},\frac{2 n-1}{2 n^2-2 n+1})}\MLine(1-\frac{1}{2 n^2-3 n+2},\frac{2 n-2}{2 n^2-3 n+2})\MDots$
\\
$\mathbf{C_6(n,m)}:$\\
$n\geq 7\land 0\leq m\leq \frac{n-7}{5}:(1,\frac{1}{n})\MDots(1-\frac{1}{n^2+n m-3 m-3},\frac{n+m}{n^2+n m-3 m-3})\MDots$ \\\> $(1-\frac{1}{n^2+n m-3 m-2},\frac{n+m}{n^2+n m-3 m-2})\MLine$\\
$n\geq 5\land \frac{n-6}{5}\leq m\leq \frac{n-5}{3}:(1,\frac{1}{n})\MDots(1-\frac{1}{7 n+6 n m-3 m-3},\frac{6 m+7}{7 n+6 n m-3 m-3})\MDots$ \\\> $(1-\frac{1}{6 n+6 n m-3 m-2},\frac{6 m+6}{6 n+6 n m-3 m-2})\MLine$\\
$n\geq 4\land m=\frac{n-4}{3}:(1,\frac{1}{n})\MDots(1-\frac{1}{2 n^2-2 n+1},\frac{2 n-1}{2 n^2-2 n+1})\MLine\overline{(1-\frac{1}{2 n^2-3 n+2},\frac{2 n-2}{2 n^2-3 n+2})}\MLine$
\\
$\mathbf{C_7(n,m)}:$\\
$n\geq 9\land 0\leq m\leq \frac{n-9}{11}:(1-\frac{1}{n^2+n m-3 m-1},\frac{n+m}{n^2+n m-3 m-1})\MLine$ \\\> $(1-\frac{1}{n^2-n+n m+2 m+3},\frac{n+m}{n^2-n+n m+2 m+3})\MDots(1-\frac{2}{n^2-n+n m+5 m+6},\frac{n+m}{n^2-n+n m+5 m+6})\MLine$ \\\> $(1-\frac{2}{n^2+n m-6 m-2},\frac{n+m}{n^2+n m-6 m-2})\MDots$\\
$n\geq 5\land \frac{n-8}{11}\leq m\leq \frac{n-5}{5}:(1-\frac{1}{n^2+n m-3 m-1},\frac{n+m}{n^2+n m-3 m-1})\MLine$ \\\> $(1-\frac{1}{n^2-n+n m+2 m+3},\frac{n+m}{n^2-n+n m+2 m+3})\MDots(1-\frac{1}{4 n+6 n m-3 m-1},\frac{6 m+4}{4 n+6 n m-3 m-1})\MDots$
\\
$\mathbf{C_8(n,m)}:$\\
$n=1\land m=-1:(0, 0)\MDots(0, 1)\MDots(-1, 1)\MLine\overline{(-1, 0)}\MLine$\\
$n=2\land m=-1:(\frac{3}{4},\frac{1}{2})\MDots(\frac{2}{3},\frac{2}{3})\MDots(\frac{1}{2},\frac{1}{2})\MLine$\\
$n=3\land m=-1:(\frac{8}{9},\frac{1}{3})\MDots(\frac{7}{8},\frac{3}{8})\MLine(\frac{5}{6},\frac{1}{3})\MLine$\\
$n\geq 4\land m=-1:(1-\frac{1}{n^2},\frac{1}{n})\MDots(1-\frac{1}{n^2-n+2},\frac{n}{n^2-n+2})\MLine(1-\frac{1}{n^2-2 n+3},\frac{n-1}{n^2-2 n+3})\MDots$ \\\> $(1-\frac{1}{n^2-n},\frac{n-1}{n^2-n})\MLine$\\
$n\geq 16\land 0\leq m\leq \frac{n-16}{11}:(1-\frac{1}{n^2+n+n m-3 m-3},\frac{n+m+1}{n^2+n+n m-3 m-3})\MDots$ \\\> $(1-\frac{1}{n^2+n m+2 m+4},\frac{n+m+1}{n^2+n m+2 m+4})\MLine(1-\frac{2}{n^2-n+n m+5 m+10},\frac{n+m}{n^2-n+n m+5 m+10})\MDots$ \\\> $(1-\frac{2}{n^2+n m-6 m-6},\frac{n+m}{n^2+n m-6 m-6})\MLine$\\
$n\geq 8\land \frac{n-15}{11}\leq m\leq \frac{n-8}{5}:(1-\frac{1}{n^2+n+n m-3 m-3},\frac{n+m+1}{n^2+n+n m-3 m-3})\MDots$ \\\> $(1-\frac{1}{n^2+n m+2 m+4},\frac{n+m+1}{n^2+n m+2 m+4})\MLine\overline{(1-\frac{1}{8 n+6 n m-3 m-3},\frac{6 m+8}{8 n+6 n m-3 m-3})}\MLine$\\
$n\geq 4\land m=\frac{n-4}{3}:(1-\frac{3}{4 n^2-4 n+3},\frac{4 n-1}{4 n^2-4 n+3})\MDots(1-\frac{3}{4 n^2-6 n+2},\frac{4 n-1}{4 n^2-6 n+2})\MLine$ \\\> $\overline{(1-\frac{2}{2 n^2-3 n+2},\frac{2 n-1}{2 n^2-3 n+2})}\MLine$
\\
$\mathbf{C_9(n,m)}:$\\
$n\geq 4\land m=\frac{n-4}{5}:(1-\frac{5}{6 n^2-7 n+7},\frac{6 n-4}{6 n^2-7 n+7})\MDots(1-\frac{5}{6 n^2-2 n+2},\frac{6 n+1}{6 n^2-2 n+2})\MDots$ \\\> $(1-\frac{5}{6 n^2-7 n+2},\frac{6 n-4}{6 n^2-7 n+2})\MLine$\\
$n\geq 7\land \frac{n-3}{5}\leq m\leq \frac{n-4}{3}:(1-\frac{1}{4 n+6 n m-3 m-1},\frac{6 m+4}{4 n+6 n m-3 m-1})\MDots$ \\\> $(1-\frac{1}{5 n+6 n m-3 m-2},\frac{6 m+5}{5 n+6 n m-3 m-2})\MDots(1-\frac{1}{4 n+6 n m-3 m-2},\frac{6 m+4}{4 n+6 n m-3 m-2})\MLine$ \\\> $\overline{(1-\frac{1}{3 n+6 n m-3 m-1},\frac{6 m+3}{3 n+6 n m-3 m-1})}\MLine$
\\
$\mathbf{C_{10}(n,m)}:$\\
$n\geq 6\land m=\frac{n-6}{5}:(1-\frac{5}{6 n^2-9 n+8},\frac{6 n-6}{6 n^2-9 n+8})\MLine\overline{(1-\frac{5}{6 n^2-4 n+3},\frac{6 n-1}{6 n^2-4 n+3})}\MLine$ \\\> $(1-\frac{5}{6 n^2-9 n+3},\frac{6 n-6}{6 n^2-9 n+3})\MDots$\\
$n\geq 5\land \frac{n-5}{5}\leq m\leq \frac{n-5}{3}:(1-\frac{1}{6 n+6 n m-3 m-2},\frac{6 m+6}{6 n+6 n m-3 m-2})\MLine$ \\\> $\overline{(1-\frac{1}{7 n+6 n m-3 m-3},\frac{6 m+7}{7 n+6 n m-3 m-3})}\MLine(1-\frac{1}{6 n+6 n m-3 m-3},\frac{6 m+6}{6 n+6 n m-3 m-3})\MDots$ \\\> $(1-\frac{1}{5 n+6 n m-3 m-2},\frac{6 m+5}{5 n+6 n m-3 m-2})\MDots$
\\
$\mathbf{C_{11}(n,m)}:$\\
$n\geq 5\land m=\frac{n-5}{5}:(1-\frac{5}{6 n^2-8 n+10},\frac{6 n-5}{6 n^2-8 n+10})\MDots(1-\frac{5}{6 n^2-3 n+5},\frac{6 n}{6 n^2-3 n+5})\MLine$ \\\> $(1-\frac{5}{6 n^2-8 n+5},\frac{6 n-5}{6 n^2-8 n+5})\MDots$\\
$n\geq 4\land \frac{n-4}{5}\leq m\leq \frac{n-4}{3}:(1-\frac{1}{5 n+6 n m-3 m-1},\frac{6 m+5}{5 n+6 n m-3 m-1})\MDots$ \\\> $(1-\frac{1}{6 n+6 n m-3 m-2},\frac{6 m+6}{6 n+6 n m-3 m-2})\MLine(1-\frac{1}{5 n+6 n m-3 m-2},\frac{6 m+5}{5 n+6 n m-3 m-2})\MDots$ \\\> $(1-\frac{1}{4 n+6 n m-3 m-1},\frac{6 m+4}{4 n+6 n m-3 m-1})\MDots$
\\
$\mathbf{C_{12}(n,m)}:$\\
$n\geq 7\land m=\frac{n-7}{5}:(1-\frac{5}{6 n^2-10 n+6},\frac{6 n-7}{6 n^2-10 n+6})\MDots(1-\frac{5}{6 n^2-5 n+1},\frac{6 n-2}{6 n^2-5 n+1})\MDots$ \\\> $(1-\frac{5}{6 n^2-10 n+1},\frac{6 n-7}{6 n^2-10 n+1})\MDots$\\
$n\geq 6\land \frac{n-6}{5}\leq m\leq \frac{n-6}{3}:(1-\frac{1}{7 n+6 n m-3 m-3},\frac{6 m+7}{7 n+6 n m-3 m-3})\MDots$ \\\> $(1-\frac{1}{8 n+6 n m-3 m-4},\frac{6 m+8}{8 n+6 n m-3 m-4})\MDots(1-\frac{1}{7 n+6 n m-3 m-4},\frac{6 m+7}{7 n+6 n m-3 m-4})\MDots$ \\\> $(1-\frac{1}{6 n+6 n m-3 m-3},\frac{6 m+6}{6 n+6 n m-3 m-3})\MDots$
\\
$\mathbf{C_{13}(n,m)}:$\\
$n\geq 3\land m=\frac{n-3}{5}:(1-\frac{5}{6 n^2-6 n+9},\frac{6 n-3}{6 n^2-6 n+9})\MDots(1-\frac{5}{6 n^2-n+4},\frac{6 n+2}{6 n^2-n+4})\MDots$ \\\> $(1-\frac{5}{6 n^2-6 n+4},\frac{6 n-3}{6 n^2-6 n+4})\MLine$\\
$n\geq 6\land \frac{n-2}{5}\leq m\leq \frac{n-3}{3}:(1-\frac{1}{3 n+6 n m-3 m},\frac{6 m+3}{3 n+6 n m-3 m})\MDots$ \\\> $(1-\frac{1}{4 n+6 n m-3 m-1},\frac{6 m+4}{4 n+6 n m-3 m-1})\MDots(1-\frac{1}{3 n+6 n m-3 m-1},\frac{6 m+3}{3 n+6 n m-3 m-1})\MDots$ \\\> $(1-\frac{1}{2 n+6 n m-3 m},\frac{6 m+2}{2 n+6 n m-3 m})\MDots$
\\
$\mathbf{C_{14}(n,m)}:$\\
$n=2\land m=-1:\overline{(\frac{3}{4},\frac{1}{2})}\MLine(\frac{4}{5},\frac{3}{5})\MDots(\frac{2}{3},\frac{2}{3})\MLine$\\
$n\geq 6\land \frac{n-7}{5}\leq m\leq \frac{n-6}{3}:\overline{(1-\frac{1}{8 n+6 n m-3 m-3},\frac{6 m+8}{8 n+6 n m-3 m-3})}\MLine$ \\\> $(1-\frac{1}{9 n+6 n m-3 m-4},\frac{6 m+9}{9 n+6 n m-3 m-4})\MLine$\\
$n\geq 5\land m=\frac{n-5}{3}:\overline{(1-\frac{1}{2 n^2-3 n+2},\frac{2 n-2}{2 n^2-3 n+2})}\MLine(\frac{2 n^2-2 n}{2 n^2-2 n+1},\frac{2 n-1}{2 n^2-2 n+1})\MDots$ \\\> $(1-\frac{1}{2 n^2-3 n+1},\frac{2 n-2}{2 n^2-3 n+1})\MDots(1-\frac{1}{2 n^2-4 n+2},\frac{2 n-3}{2 n^2-4 n+2})\MLine$
\\
$\mathbf{C_{15}(n,m)}:$\\
$n\geq 6\land \frac{n-7}{5}\leq m\leq \frac{n-6}{3}:(1-\frac{1}{8 n+6 n m-3 m-3},\frac{6 m+8}{8 n+6 n m-3 m-3})\MLine$ \\\> $(1-\frac{1}{7 n+6 n m-3 m-3},\frac{6 m+7}{7 n+6 n m-3 m-3})\MLine$\\
$n\geq 4\land m=\frac{n-4}{3}:(1,\frac{1}{n})\MDots(1-\frac{1}{2 n^2-2 n+1},\frac{2 n-1}{2 n^2-2 n+1})\MLine\overline{(1-\frac{1}{2 n^2-3 n+2},\frac{2 n-2}{2 n^2-3 n+2})}\MLine$
\\
$\mathbf{C_{16}(n,m)}:$\\
$n\geq 7\land \frac{n-4}{5}\leq m\leq \frac{n-4}{3}:(1-\frac{1}{5 n+6 n m-3 m-2},\frac{6 m+5}{5 n+6 n m-3 m-2})\MLine$ \\\> $(1-\frac{1}{4 n+6 n m-3 m-1},\frac{6 m+4}{4 n+6 n m-3 m-1})\MLine$
\\
$\mathbf{C_{17}(n,m)}:$\\
$n\geq 5\land \frac{n-5}{5}\leq m\leq \frac{n-5}{3}:\overline{(1-\frac{1}{6 n+6 n m-3 m-2},\frac{6 m+6}{6 n+6 n m-3 m-2})}$
\\
$\mathbf{C_{18}(n,m)}:$\\
$n\geq 8\land \frac{n-4}{5}\leq m\leq \frac{n-5}{3}:\overline{(1-\frac{1}{4 n+6 n m-3 m-2},\frac{6 m+4}{4 n+6 n m-3 m-2})}$
\\
$\mathbf{C_{19}(n,m)}:$\\
$n=1\land m=0:\overline{(\frac{1}{2},\frac{3}{4})}\MLine(\frac{1}{2},1)\MLine\overline{(\frac{2}{5},\frac{4}{5})}\MLine$\\
$n\geq 3\land n \equiv 0 \bmod{3}\land m=0:(1-\frac{1}{2 n^2-3 n+2},\frac{2 n-2}{2 n^2-3 n+2})\MLine\overline{(1-\frac{1}{2 n^2-2 n+1},\frac{2 n-1}{2 n^2-2 n+1})}\MLine$\\
\> $(1-\frac{3}{4 n^2-6 n+3},\frac{4 n-3}{4 n^2-6 n+3})\MDots(1-\frac{3}{4 n^2-6 n+6},\frac{4 n-3}{4 n^2-6 n+6})\MDots$\\
$n\geq 9\land n \equiv 0 \bmod{3}\land 1\leq m\leq \frac{2 n-6}{9}:(1-\frac{3}{4 n^2-4 n+9 n m+3},\frac{4 n+9 m-3}{4 n^2-4 n+9 n m+3})\MLine$\\
\> $\overline{(1-\frac{1}{2 n^2-2 n+6 n m-3 m+1},\frac{2 n+6 m-1}{2 n^2-2 n+6 n m-3 m+1})}\MLine$\\
\> $(1-\frac{3}{4 n^2-6 n+9 n m-9 m+3},\frac{4 n+9 m-3}{4 n^2-6 n+9 n m-9 m+3})\MDots$\\
$n\geq 4\land n \equiv 1 \bmod{3}\land 0\leq m\leq \frac{2 n-8}{9}:(1-\frac{3}{4 n^2-2 n+9 n m+4},\frac{4 n+9 m-1}{4 n^2-2 n+9 n m+4})\MLine$ \\\> $\overline{(1-\frac{1}{2 n^2+6 n m-3 m},\frac{2 n+6 m+1}{2 n^2+6 n m-3 m})}\MLine(1-\frac{3}{4 n^2-4 n+9 n m-9 m},\frac{4 n+9 m-1}{4 n^2-4 n+9 n m-9 m})\MDots$\\
$n\geq 5\land n \equiv 2 \bmod{3}\land 0\leq m\leq \frac{2 n-10}{9}:\overline{(1-\frac{3}{4 n^2+3 n+9 n m+2},\frac{4 n+9 m+4}{4 n^2+3 n+9 n m+2})}\MLine$\\
\> $\overline{(1-\frac{1}{2 n^2+2 n+6 n m-3 m-1},\frac{2 n+6 m+3}{2 n^2+2 n+6 n m-3 m-1})}\MLine$\\
\> $\overline{(1-\frac{3}{4 n^2+n+9 n m-9 m-3},\frac{4 n+9 m+4}{4 n^2+n+9 n m-9 m-3})}\MLine$\\
$n\geq 6\land m=\frac{2 n-3}{9}:(1-\frac{3}{6 n^2-7 n+3},\frac{6 n-6}{6 n^2-7 n+3})\MLine\overline{(1-\frac{3}{10 n^2-14 n+6},\frac{10 n-9}{10 n^2-14 n+6})}\MLine$\\
\> $(1-\frac{3}{10 n^2-20 n+6},\frac{10 n-15}{10 n^2-20 n+6})\MDots(1-\frac{2}{4 n^2-6 n+1},\frac{4 n-4}{4 n^2-6 n+1})\MDots$\\
$n\geq 7\land m=\frac{2 n-5}{9}:(1-\frac{3}{6 n^2-7 n+4},\frac{6 n-6}{6 n^2-7 n+4})\MLine\overline{(1-\frac{3}{10 n^2-15 n+8},\frac{10 n-10}{10 n^2-15 n+8})}\MLine$\\
\> $(1-\frac{1}{2 n^2-3 n+2},\frac{2 n-2}{2 n^2-3 n+2})\MDots$\\
$n\geq 8\land m=\frac{2 n-7}{9}:\overline{(1-\frac{3}{6 n^2-4 n+2},\frac{6 n-3}{6 n^2-4 n+2})}\MLine(1-\frac{3}{10 n^2-10 n+4},\frac{10 n-5}{10 n^2-10 n+4})\MDots$\\
\> $(1-\frac{1}{2 n^2-2 n+1},\frac{2 n-1}{2 n^2-2 n+1})\MLine$
\\
\end{ntabbing}
\endgroup
\noindent
The following selection of cycles cuts out everything outside $\GC$ which proves Theorem~\ref{TMain}~(i):\\[-0.5\baselineskip]
\phantom{a}
\begin{ntabbing}
$C_{10}(n,m):$ \quad \= $n\geq 5\land \frac{n-6}{5}\leq m\leq \frac{n-5}{3}$ \,\,\= $C_{11}(n,m):$ \quad \= \kill
$C_0(1), \ldots, C_0(14), C_8(2,-1), C_2(3,-1), C_{19}(3,0)$, $C_4(6,0)$,\\
$C_1(n,m):$ \> $n\geq 2\land -1\leq m\leq \frac{n-5}{3}$, \> $C_{10}(n,m):$ \> $n\geq 5\land \frac{n-6}{5}\leq m\leq \frac{n-5}{3}$,\\
$C_2(n,m):$ \> $n\geq 4\land -1\leq m\leq \frac{n-7}{3}$, \> $C_{11}(n,m):$ \> $n\geq 4\land \frac{n-5}{5}\leq m\leq \frac{n-4}{3}$,\\
$C_3(n,m):$ \> $n\geq 5\land 0\leq m\leq \frac{n-5}{3}$, \> $C_{12}(n,m):$ \> $n\geq 6\land \frac{n-7}{5}\leq m\leq \frac{n-6}{3}$,\\
$C_4(n,m):$ \> $n\geq 7\land 0\leq m\leq \frac{n-7}{3}$, \> $C_{13}(n,m):$ \> $n\geq 3\land \frac{n-3}{5}\leq m\leq \frac{n-3}{3}$,\\
$C_5(n,m):$ \> $n\geq 2\land 0\leq m\leq \frac{n-2}{3}$, \> $C_{14}(n,m):$ \> $n\geq 6\land \frac{n-7}{5}\leq m\leq \frac{n-6}{3}$,\\
$C_6(n,m):$ \> $n\geq 4\land 0\leq m\leq \frac{n-4}{3}$, \> $C_{15}(n,m):$ \> $n\geq 6\land \frac{n-7}{5}\leq m\leq \frac{n-6}{3}$,\\
$C_7(n,m):$ \> $n\geq 5\land 0\leq m\leq \frac{n-5}{5}$, \> $C_{16}(n,m):$ \> $n\geq 7\land \frac{n-4}{5}\leq m\leq \frac{n-4}{3}$,\\
$C_8(n,m):$ \> $n\geq 3\land -1\leq m\leq \frac{n-8}{5}$, \> $C_{17}(n,m):$ \> $n\geq 5\land \frac{n-5}{5}\leq m\leq \frac{n-5}{3}$,\\
$C_9(n,m):$ \> $n\geq 4\land \frac{n-4}{5}\leq m\leq \frac{n-4}{3}$, \> $C_{18}(n,m):$ \> $n\geq 8\land \frac{n-4}{5}\leq m\leq \frac{n-5}{3}$,\\
$C_{19}(n,m):$ \> $n\geq 4\land \frac{1-n \bmod 3}{2}\leq m\leq \frac{2 n - 2 (n \bmod 3) - 5}{9},$\\
\end{ntabbing}
The figures below show a regular sector (where the polygons of the infinite families are sufficient to cut out the respective part). In the first figure it can be seen for $n = 20$ that the whole region outside $\GC$ in the sector $\frac{1}{n} < \arctan{\phi} \leq \frac{1}{n - 1}$ of the unit disk is being cut out. It can be shown by comparing the coordinates of the vertices of the polygons that this is the case for every $n \geq 7$. The subsequent figure shows the polygons moved apart in groups to illustrate how they fit together. It can be seen that the dotted lines (indicating parts of the boundary which do not belong to the corresponding polygon) of one group hit solid ones (indicating parts which to belong) of the other group and vice versa, and that single missing points are also complemented (indicated by prominent dots at the respective position). Note that the polygons from the family 19 are needed to cut out a small region remaining in the respective sector if only the families one to 18 are considered. In fact a single (but not arbitrary) polygon of family 19 would be sufficient.
\begin{figure}[H]
\includegraphics[width=10cm]{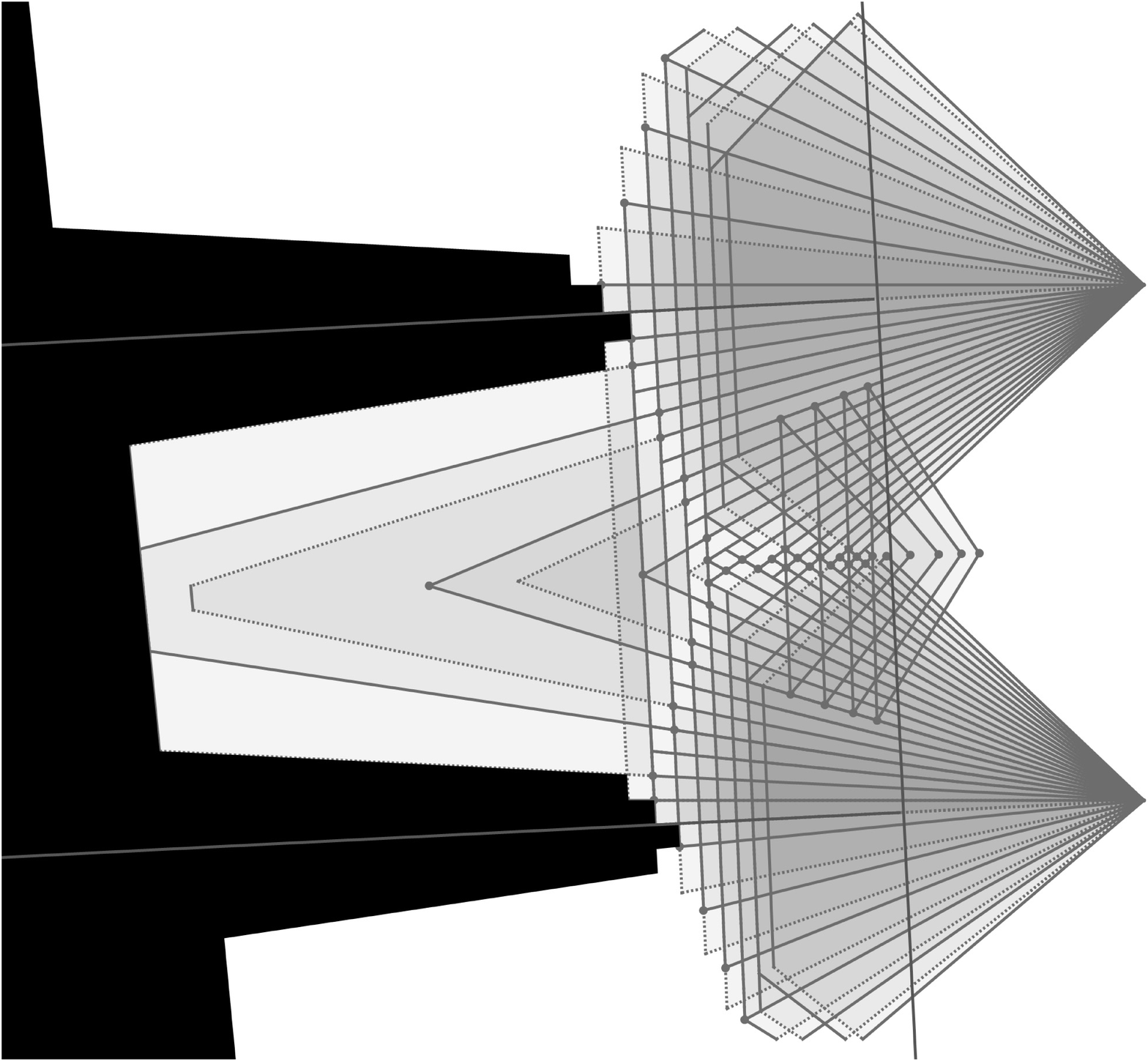}
\end{figure}
\begin{figure}[H]
\includegraphics[width=11.3cm]{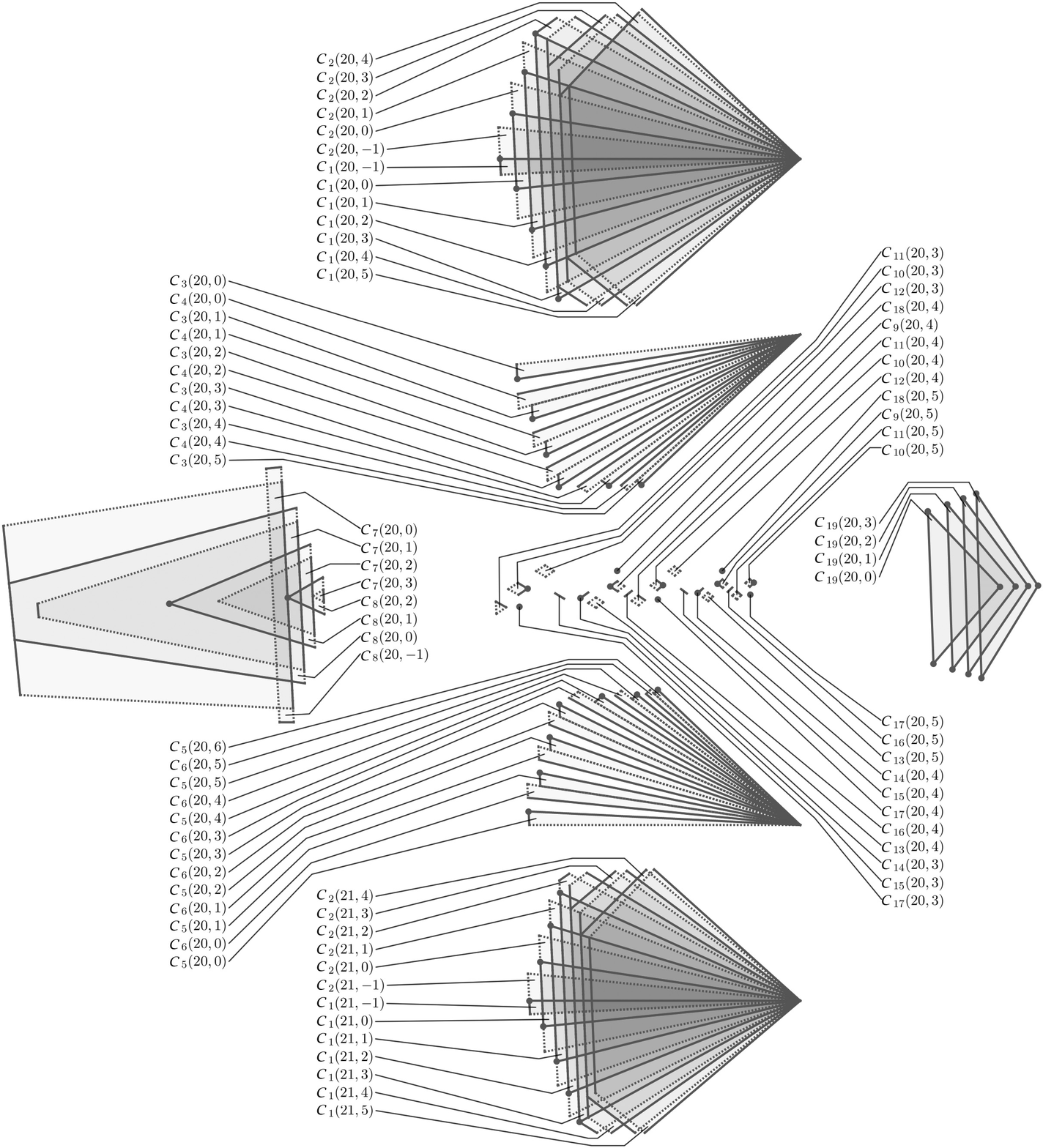}
\end{figure}
The families were found by manual search. First a list of cycles was computed the corresponding cutout polygons of which would cover the necessary parts of several successive sectors. These cycles were then grouped to families by hand.
\section{The other inclusion}
\label{SInclusion2}
By applying analogues of the two algorithms introduced in \cite{Weitzer15}, Section~3, the proof of Theorem~\ref{TMain}~(ii) could be achieved (being much more efficient it was mostly Algorithm~2 that was applied). The disk  $\set{z\in\C\mid\abs{z}\leq\frac{2047}{2048}}$, which covers the settled region, contains all pikes up to and including the 30th. Since the 8th pike is already regular and the general regular structure of the Loudspeaker is therefore verified for quite many pikes, it appears reasonable to assume that Conjecture~\ref{CMain} is in fact true. Despite best efforts a general proof could not be given by now.

Translating the two algorithms of \cite{Weitzer15} to the complex setting is straight forward. As pointed out in Section~\ref{SPreliminaries}, $\GN{d}$ is the disjoint union of those $P_\vector{r}$ the corresponding parameters $\vector{r}$ of which belong to $\GN{d}$. It is easy to see that, just as in the real case, any given convex hull $H\subseteq\G{d}$ of finitely many interior points of $\G{d}$ intersects with only finitely many of the $P_\vector{r}$ (cf. \cite{Weitzer15}, Theorem~3.2). Thus the analogue of Algorithm~1 of \cite{Weitzer15}, which essentially computes exactly those $P_\vector{r}$ which intersect with $H$, also holds for all inputs $H$. If $\vector{r}\in\D{d}$ then $V_\vector{r}$ is finite and $P_\vector{r}=\set{\vector{s} \in \R^d \mid \fa \vector{a} \in V_\vector{r}:\fa i\in\set{1,\ldots,4}: \gamma_\vector{r}^{(i)}(\vector{a})=\gamma_\vector{s}^{(i)}(\vector{a})}$ is given by a system of $16\abs{V_\vector{r}}$ linear inequalities the solution of which is a convex polyhedron. For $d=2$, $\vector{r}=(r_x,r_y)$,  and $s=(x,y)$ (we identify $\C^1\simeq\R^2$ and $\Zi^1\simeq\Z^2$) the $16$ inequalities induced by $\vector{a}=(a,b)$ are given by

\begin{tabbing}
$\fa i\in\set{1,\ldots,4}:\gamma_\vector{r}^{(i)}(\vector{a})=\gamma_\vector{s}^{(i)}(\vector{a})\Leftrightarrow$\\
$\phantom{xxx}$\=$-$\=$xa+yb-\lfloor-$\=$r_xa+r_yb\rfloor\geq0\:\land\:$\=$-$\=$xa+yb+\lfloor-$\=$r_xa-r_yb\rfloor+1>0\:\land$\kill
\>\>$xa+yb-\lfloor$\>$r_xa+r_yb\rfloor\geq0\:\land\:$\>\>$xa+yb+\lfloor-$\>$r_xa-r_yb\rfloor+1>0$\\
\>\>$xa-yb-\lfloor$\>$r_xa-r_yb\rfloor\geq0\:\land\:$\>\>$xa-yb+\lfloor-$\>$r_xa+r_yb\rfloor+1>0$\\
\>$-$\>$xa+yb-\lfloor-$\>$r_xa+r_yb\rfloor\geq0\:\land\:$\>$-$\>$xa+yb+\lfloor$\>$r_xa-r_yb\rfloor+1>0$\\
\>$-$\>$xa-yb-\lfloor-$\>$r_xa-r_yb\rfloor\geq0\:\land\:$\>$-$\>$xa-yb+\lfloor$\>$r_xa+r_yb\rfloor+1>0$\\
\>\>$xb+ya-\lfloor$\>$r_xb+r_ya\rfloor\geq0\:\land\:$\>\>$xb+ya+\lfloor-$\>$r_xb-r_ya\rfloor+1>0$\\
\>\>$xb-ya-\lfloor$\>$r_xb-r_ya\rfloor\geq0\:\land\:$\>\>$xb-ya+\lfloor-$\>$r_xb+r_ya\rfloor+1>0$\\
\>$-$\>$xb+ya-\lfloor-$\>$r_xb+r_ya\rfloor\geq0\:\land\:$\>$-$\>$xb+ya+\lfloor$\>$r_xb-r_ya\rfloor+1>0$\\
\>$-$\>$xb-ya-\lfloor-$\>$r_xb-r_ya\rfloor\geq0\:\land\:$\>$-$\>$xb-ya+\lfloor$\>$r_xb+r_ya\rfloor+1>0$.
\end{tabbing}

If $\vector{a}\neq(0,0)$ then the solution set of the system of inequalities above is the intersection of $4$ half-open squares with side lengths $1/\abs{\vector{a}}$ (if $\vector{a}=(0,0)$ then the solution set is of course equal to $\C$). The $4$ squares are arranged in a way such that the intersection of them is either a singleton, an open line segment, or a nondegenerate, open, convex polygon. Both algorithms from \cite{Weitzer15} can now be applied in the same way as for real SRS with the only difference being the systems of inequalities one has to consider.
 
\section{Consequences of the conjecture}
\label{SConsequences}
If the Loudspeaker coincides with $\GC$, its perimeter is two times the sum of all distances of successive vertices of the boundary of the intersection of $\GC$ and the first quadrant.
\begin{corollary}
If $\GN{1}=\GC$ then the perimeter of the Loudspeaker is\\
$2 \displaystyle\sum_{n = 8}^\infty \Bigg(\textstyle\frac{(n - 2)\sqrt{n^2 + 1}}{\left(n^2 - n - 1\right)\left(n^2 - 2\right)} + \frac{\sqrt{n^2 + 1}}{\left(n^2 + 1\right)\left(n^2 + n + 1\right)} + \frac{\sqrt{n^2 + 4}}{\left(n^2 + 2\right)\left(n^2 + n + 2\right)} + \frac{\sqrt{n^2 - 2 n + 2}}{n^4 - 2 n^3 + n} + \frac{\sqrt{n^2 + 1}}{n^4 + 5 n^2 + 6} + $\\
\phantom{$2 \displaystyle\sum_{n = 8}^\infty \Bigg($}$\frac{\sqrt{n^6 + n^4}}{n^6 + n^4} + \frac{(n - 1)\sqrt{n^2 + 9}}{\left(n^2 + 3\right)\left(n^2 + n + 6\right)} + \frac{\sqrt{n^2 + 2 n + 2}}{\left(n^2 + n + 1\right)\left(n^2 + n + 2\right)} + \frac{(n - 7)\sqrt{n^2 + 2 n + 5}}{\left(n^2 + n + 6\right)\left(n^2 + 2 n - 1\right)}\Bigg) - \frac{\pi^2}{3} + \frac{3845467959583\sqrt{2}}{2154669737220} + \frac{48281\sqrt{5}}{270270} + \frac{28279\sqrt{10}}{311220} + \frac{\sqrt{13}}{77} + \frac{2789\sqrt{17}}{79560} + \frac{18018457\sqrt{26}}{1214863650} + \frac{\sqrt{29}}{432} + \frac{\sqrt{34}}{322} + \frac{3453570319189\sqrt{37}}{335814194609712} + \frac{\sqrt{53}}{1479} + \frac{3\sqrt{58}}{806} + \frac{\sqrt{65}}{1653} + 6$\\[0.5\baselineskip]
which is approximately $7.0317015814551008990992430035469692210269(4)$.
\end{corollary}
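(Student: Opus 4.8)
The plan is to reduce the perimeter to a single series by using the explicit description of $\GC$ from Section~\ref{SConjecture}, then to split that series into a uniform \emph{regular tail} ($n\ge 8$) and a finite \emph{irregular head}, and to evaluate each part separately.

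First I would use the assumption $\GN{1}=\GC$ together with the symmetry of $\GC$ in the real axis to reduce to a first-quadrant computation. From the polygonal chain defining $\GC$ one reads off that the only point of $\partial\GC$ on the real axis is the accumulation point $P_0(1)=(1,0)$; the segment from $P_0(1)$ to $P_5(0)=(0,0)$ lies on the real axis but belongs to the \emph{interior} of $\GC$, being bounded on both sides by $\GC$ and its reflection. Hence $\partial\GC$ is the Jordan curve formed by the part of the chain above the real axis together with its mirror image, so $\operatorname{perimeter}(\GC)=2S$, where $S$ is the sum of the lengths of the edges of the polygonal arc $P_5(0)\to P_6(0)\to P_5(1)\to\cdots$ that runs through all pikes and accumulates at $P_0(1)$ — that is, the part of $\partial\GC$ lying in the closed first quadrant. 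It then suffices to show that $2S$ equals the asserted expression.

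Next I would split $S=S_{\mathrm{irr}}+S_{\mathrm{reg}}$, where $S_{\mathrm{reg}}$ collects the edges of the pikes $n\ge 8$ and $S_{\mathrm{irr}}$ the finitely many edges of the arc up to and including the connector $P_9(7)\to P_1(8)$. For the tail, each pike $n\ge 8$ contributes the ten edges $P_1(n)P_2(n),\,P_2(n)P_3(n),\,\dots,\,P_9(n)P_{10}(n)$ and $P_{10}(n)P_1(n+1)$; substituting the coordinates $P_j(n)$ from Section~\ref{SConjecture} into the Euclidean distance and simplifying, nine of these lengths turn out to be exactly the nine summands occurring under $\sum_{n=8}^\infty$ in the statement, while the remaining edge $P_3(n)P_4(n)$ is horizontal — both endpoints have second coordinate $1/n$ — and hence has length $1/(n^2(n-1))$. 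Using the partial-fraction identity $\tfrac1{n^2(n-1)}=\tfrac1{n-1}-\tfrac1n-\tfrac1{n^2}$ and $\sum_{n\ge 1}n^{-2}=\pi^2/6$ one gets $\sum_{n=8}^\infty\tfrac1{n^2(n-1)}=\tfrac17-\tfrac{\pi^2}{6}+\sum_{k=1}^{7}k^{-2}$, hence $2\sum_{n=8}^\infty\tfrac1{n^2(n-1)}=\tfrac27-\tfrac{\pi^2}{3}+2\sum_{k=1}^{7}k^{-2}$, which is where the term $-\pi^2/3$ of the formula comes from. For the head, I would go through the boundary description edge by edge: each of the finitely many edges of the irregular arc joins two points with rational coordinates, so its length is a rational multiple of $\sqrt d$ for a squarefree $d$ (with $d=1$ for horizontal or vertical edges, such as the segment $P_5(0)P_6(0)$ of length $1$), and collecting equal radicands expresses $2S_{\mathrm{irr}}$ as a rational number plus the listed surd terms $\tfrac{3845467959583\sqrt2}{2154669737220}+\cdots+\tfrac{\sqrt{65}}{1653}$; the rational number, added to $\tfrac27+2\sum_{k=1}^{7}k^{-2}$, should come out to exactly $6$. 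Putting the three pieces together gives the closed form, and the numerical value then follows by evaluating all constants and the series $\sum_{n=8}^\infty(\text{nine terms})$ to high precision, for which some acceleration (e.g.\ Euler--Maclaurin) is needed because the summands decay only like $n^{-2}$, yielding $7.0317015814551008990992430035469692210269(4)$.

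The conceptual part of the argument — the symmetry reduction, the identification of the ten tail edges of a regular pike, the telescoping that produces $-\pi^2/3$, and the high-precision numerical summation — is short and routine. The bulk of the work, and essentially the only place an error could creep in, is the bookkeeping for $S_{\mathrm{irr}}$: enumerating all of the finitely many edges of the irregular head correctly from the boundary description, computing and simplifying each length, and checking that the rational contributions collapse to exactly $6$ while the radical contributions reproduce exactly the stated coefficients. That is the step where I expect the main obstacle to lie.
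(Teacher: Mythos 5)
Your proposal is correct and follows essentially the same route as the paper, which gives no argument beyond the remark that the perimeter is twice the sum of the distances of successive boundary vertices in the first quadrant; your elaboration of that computation checks out (the nine radical summands are exactly the nine non-horizontal edges of a regular pike, the horizontal edge $P_3(n)P_4(n)$ has length $\tfrac{1}{n^2(n-1)}$ and telescopes to produce the $-\pi^2/3$, and the finitely many irregular edges account for the surds and the rational constant).
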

The area can easily be calculated using the fact that $\GC$ is star-shaped with respect to the origin. The total area is just two times the sum of the areas of all triangles where two vertices are successive vertices of the boundary of the intersection of $\GC$ and the first quadrant and the third one is $(0, 0)$.
\begin{corollary}
If $\GN{1}=\GC$ and $\psi$ denotes the digamma function then the area of the Loudspeaker is\\
$\frac{1}{2}(\psi(9 - i) + \psi(9 + i) - \frac{1}{3}(3 - i\sqrt{3})\psi(\frac{1}{2}(17 - i\sqrt{3})) - \frac{1}{3}(3 + i\sqrt{3})\psi(\frac{1}{2}(17 + i\sqrt{3})) - \psi(9 - \sqrt{2}) - \psi(9 + \sqrt{2}) - \frac{1}{2}\psi(8 - i\sqrt{2}) - \frac{1}{2}\psi(8 + i\sqrt{2}) + \frac{1}{3}\psi(8 - i\sqrt{3}) + \frac{1}{3}\psi(8 + i\sqrt{3}) + \frac{1}{5}(5 - \sqrt{5})\psi(\frac{1}{2}(17 - \sqrt{5})) + \frac{1}{5}(5 + \sqrt{5})\psi(\frac{1}{2}(17 + \sqrt{5})) + \frac{1}{14}(7 - i\sqrt{7})\psi(\frac{1}{2}(17 - i\sqrt{7})) + \frac{1}{14}(7 + i\sqrt{7})\psi(\frac{1}{2}(17 + i\sqrt{7})) - \frac{1}{69}(23 - i\sqrt{23})\psi(\frac{1}{2}(17 - i\sqrt{23})) - \frac{1}{69}(23 + i\sqrt{23})\psi(\frac{1}{2}(17 + i\sqrt{23})) - 2\psi'(1) + \psi''(1) + \frac{6459645509579599739}{831140131659037200})$\\[0.5\baselineskip]
which is approximately $1.1616244963841538925201560564707674346082(2)$.
\end{corollary}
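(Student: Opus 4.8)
\medskip
\noindent\textbf{Proof sketch.}
The plan is to write the area as a fan of triangles based at the origin, isolate the regular tail of the boundary as an infinite sum of rational functions of the pike index $n$, and evaluate that sum by the reflection identity for the digamma function. One first checks that $\GC$ is star-shaped with respect to $\vector{0}$: away from the degenerate rays along the axes, every ray from the origin meets the boundary once, because along the first-quadrant boundary chain the argument $\arg(\cdot)$ of the successive vertices $(1,0),(0,0),(0,1),P_5(1),P_6(1),\dots$ and $P_1(n),\dots,P_{10}(n)$ (for $n\ge 8$) is strictly decreasing --- a finite verification for the irregular initial part and, for the tail, a uniform estimate showing that within pike $n$ the argument varies by only $O(1/n^3)$ while consecutive pikes are $\Theta(1/n^2)$ apart in angle. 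Granting this, the area equals $2\sum_k\tfrac12\bigl|\det(V_k,V_{k+1})\bigr|$ over consecutive vertices $V_k$ of that chain, and all the determinants $\det(V_k,V_{k+1})=V_{k,1}V_{k+1,2}-V_{k,2}V_{k+1,1}$ have the same sign once the orientation is fixed, so the absolute values may be dropped.

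Next I would split the chain into its finite irregular initial segment --- the vertices occurring before $P_1(8)$, i.e. $(1,0),(0,0),(0,1),P_0(2),P_0(3)$ and the vertices of pikes $1$ through $7$ --- and the regular tail, whose contribution is $\sum_{n\ge 8}A(n)$ with $A(n)\ce\tfrac12\bigl(\sum_{i=1}^{9}\det(P_i(n),P_{i+1}(n))+\det(P_{10}(n),P_1(n+1))\bigr)$. The finite part evaluates to an explicit algebraic number. For the tail, every vertex has the form $P_i(n)=\bigl(1-\tfrac{a_i}{Q_i(n)},\tfrac{L_i(n)}{Q_i(n)}\bigr)$ with $Q_i$ a monic quadratic (a product of such for a few indices) and $L_i$ linear, so each determinant is a rational function of $n$ with denominator dividing $Q_i(n)Q_{i+1}(n)$; combining the ten terms, $A(n)=R(n)$ for a single rational function $R$ whose denominator divides the product of the finitely many distinct quadratics occurring --- such as $n^2-n-1$, $n^2-2$, $n^2$, $n^2+1$, $n^2+n+1$, $n^2+2$, $n^2+n+2$, $n^2+3$, $n^2+n+6$, $n^2+2n-1$ --- together with their shifts $n\mapsto n+1$. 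Direct computation of a few edges is instructive, e.g. $\det(P_3(n),P_4(n))=\tfrac{-1}{n^3(n-1)}$ and $\det(P_4(n),P_5(n))=\tfrac{-1}{n(n^2+1)}$, showing where the higher-order pole at $0$ comes from.

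Then I would compute the partial fraction decomposition of $R(n)$ over $\C$; convergence of $\sum_n A(n)$ forces the polynomial part to vanish and the simple-pole residues to cancel in the combinations that render each tail sum convergent. The surviving simple-pole pieces, grouped as $\tfrac{c}{n+\alpha}-\tfrac{c}{n+\beta}$, sum from $n=8$ to $c\bigl(\psi(8+\beta)-\psi(8+\alpha)\bigr)$ by $\sum_{n\ge N}\bigl(\tfrac1{n+\alpha}-\tfrac1{n+\beta}\bigr)=\psi(N+\beta)-\psi(N+\alpha)$; resolving each quadratic as $n^2+bn+c=(n-\rho)(n-\overline\rho)$ and absorbing the unit-shifts (so that, e.g., $n^2+2n-1$ produces $\psi(9\pm\sqrt2)$, $n^2+n+1$ produces $\psi(\tfrac12(17\pm i\sqrt3))$, $n^2+n+6$ produces $\psi(\tfrac12(17\pm i\sqrt{23}))$, and $(n-1)^2+1$ produces $\psi(9\pm i)$) yields exactly the listed digamma values, the conjugate pairs combining into real quantities. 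The double and triple poles at $0$ contribute $\sum_{n\ge 8}\tfrac1{n^2}=\psi'(8)=\psi'(1)-\sum_{k=1}^{7}\tfrac1{k^2}$ and $\sum_{n\ge 8}\tfrac1{n^3}=-\tfrac12\psi''(8)=-\tfrac12\psi''(1)-\sum_{k=1}^{7}\tfrac1{k^3}$, so the symbolic pieces $-2\psi'(1)+\psi''(1)$ remain while the finite tails fold into the rational constant; collecting everything, including the irregular part, gives the stated constant $\tfrac{6459645509579599739}{831140131659037200}$, and doubling produces the asserted expression, whose numerical value $\approx 1.1616\ldots$ serves as an independent check.

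The main obstacle is not any single idea but the sheer volume of exact symbolic bookkeeping: enumerating the irregular vertices without error, forming and combining on the order of a hundred determinants, carrying out a partial fraction decomposition over a denominator of degree roughly twenty, and matching the outcome to the digamma reflection formula with complex arguments while tracking every unit-shift (the $8\mapsto 9$ and $8\mapsto\tfrac{17}{2}$ phenomena) and every residue that must cancel. A secondary difficulty is making the triangle decomposition rigorous --- proving strict monotonicity of $\arg P_i(n)$ along the whole chain, which for the regular tail reduces to a family of polynomial inequalities in $n$ that must be verified uniformly for $n\ge 8$.
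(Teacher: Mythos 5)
Your proposal follows essentially the same route as the paper, which gives no more detail than the remark preceding the corollary: use star-shapedness of $\GC$ with respect to the origin and sum the (doubled) areas of the triangles spanned by the origin and consecutive boundary vertices in the first quadrant, with the regular tail summed in closed form via the digamma function. Your sketch is a correct and considerably more detailed account of that computation (one small caveat: the argument along the chain is only weakly monotone, since the edge from $P_5(n)$ to $P_6(n)$ is radial and contributes a degenerate triangle, as the paper itself notes in Section~\ref{SCritical}).
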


\section{Critical points}
\label{SCritical}
One consequence of Theorem~\ref{TMain}~(i) is the following proposition on weakly critical and critical points of $\GN{1}$. A weakly critical point is a point $\vector{r}\in\C^d$ any open neighborhood of which intersects with infinitely many cutout polyhedra. A critical point is a point $\vector{r}\in\C^d$ any open neighborhood $U$ of which satisfies that $U\setminus\GN{d}$ cannot be covered by finitely many cutout polyhedra. Both notions were first introduced in \cite{ABBPTI}, Section~7.
\begin{proposition}
$1$ and $\pm\I$ are the only weakly critical points of $\GN{1}$ and $1$ is the only critical point of $\GN{1}$.
\end{proposition}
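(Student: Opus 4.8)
The plan is to combine the explicit cutting of $\C\setminus\GC$ constructed in Section~\ref{SInclusion1} with two soft observations: for $\abs{\vector r}\neq 1$ only finitely many cutout polyhedra come near $\vector r$, and $\overline{\GN{1}}\subseteq\overline{\GC}$ by Theorem~\ref{TMain}~(i). First I would localise weakly critical points to the unit circle. If $\abs{\vector r}<1$, fix $c$ with $\abs{\vector r}<c<1$; from $\abs{\gamma_\vector s(a)}\leq c\abs{a}+\sqrt2$ every cycle of every $\vector s$ with $\abs{\vector s}\leq c$ lies in $\set{z\in\C\mid\abs z<\sqrt2/(1-c)+1}$, so a ball $B(\vector r,\rho)\subseteq\set{z\mid\abs z\leq c}$ meets only the finitely many $P(\pi)$ arising from primitive cycles inside that disc. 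If $\abs{\vector r}>1$, fix $\varepsilon$ with $0<\varepsilon<(\abs{\vector r}-1)/2$, so $B(\vector r,\varepsilon)\subseteq\set{z\mid\abs z\geq 1+\varepsilon}$; if $\pi=(a_1,\dots,a_\ell)\in P(\pi)$'s defining cycle has $\abs{a_i}$ maximal and $\pi$ is a cycle of some $\vector s\in B(\vector r,\varepsilon)$, then $(1+\varepsilon)\abs{a_i}\leq\abs{\vector s a_i}\leq\abs{a_{i+1}}+\sqrt2\leq\abs{a_i}+\sqrt2$, so all entries of $\pi$ have modulus $\leq\sqrt2/\varepsilon$ and again only finitely many cutout polyhedra meet $B(\vector r,\varepsilon)$. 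Hence a weakly critical point has modulus $1$, and by Theorem~\ref{TMain}~(i) it lies in $\overline{\GC}$; the only points of $\overline{\GC}$ on $\set{z\mid\abs z=1}$ are $1=P_0(1)$, $\I=P_6(0)=P_4(1)$ and $-\I$, because every other vertex of the defining polygonal chain has modulus strictly less than $1$ (a direct computation), edges join such vertices and hence stay in the open unit disc, and the chain accumulates only at $1$.

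Next I would show that $1$, $\I$, $-\I$ are weakly critical and that $1$ is critical. The cutout polygons produced by the families $C_1,\dots,C_{19}$ of Section~\ref{SInclusion1} have all their vertices among the points $P_j(n)$ and rational points tending to $P_0(1)=1$ as $n\to\infty$, while for fixed $n$ only finitely many $m$ are admissible; so these are pairwise distinct cutout polyhedra accumulating at $1$, and every neighbourhood of $1$ meets infinitely many of them. Moreover the pikes are genuine: for each $n$ one of these polygons (e.g.\ the triangle with vertices $(1,\tfrac1n)$, $(1-\tfrac1{n^2-1},\tfrac n{n^2-1})$, $(1-\tfrac1{n^2},\tfrac n{n^2})$ coming from $C_5(n,0)$, whose apex has modulus $>1$) straddles the unit circle beyond pike $n$, and a finite family of convex cutout polyhedra cannot cover the interiors of all these triangles for $n\geq N$, since a convex set meeting two of them in two-dimensional sets would contain a segment crossing an intervening pike and hence a point of $\GN{1}$; thus $1$ is a critical point. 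For $\I$: $\gamma_\I$ has the nontrivial $4$-cycle $(1,-\I,-1,\I)$, so $\I\notin\GN{1}$; the open segment from $0$ to $\I$ lies in $\GN{1}$; and since $\gamma_\I$ has no fixed point, for $\vector r$ near $\I$ the map $\gamma_\vector r$ admits cycles of unbounded length whose cutout polyhedra cluster against that segment and shrink to $\I$ — exhibiting such an infinite family (in the spirit of the $C_i$) shows $\I$, and by the real-axis symmetry of $\GN{1}$ also $-\I$, is weakly critical. Neither is critical: the quadrilateral $P((1,-\I,-1,\I))$ together with finitely many further cutout polyhedra already covers $U\setminus\GN{1}$ for a small neighbourhood $U$ of $\I$.

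It then remains to check that no further point of the unit circle is weakly critical. Let $\abs{\vector r_0}=1$, $\vector r_0\notin\set{1,\I,-\I}$; then $\vector r_0\notin\overline{\GC}\supseteq\overline{\GN{1}}$, so a neighbourhood of $\vector r_0$ misses $\GN{1}$. If $-1<\Re\vector r_0<0$ then $1$ is a fixed point of $\gamma_{\vector r_0}$ and $\vector r_0$ lies in the interior of the cutout polyhedron $P((1))=\set{x+\I y\mid -1\leq x<0,\ 0\leq y<1}$; combining the absence of large cycles of $\gamma_\vector s$ for $\vector s$ near $\vector r_0$ with the localisation argument applied on both sides of the circle, only finitely many cutout polyhedra are met. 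If $\Re\vector r_0>0$ and $\arg\vector r_0$ is small then $\vector r_0$ lies in one of the pike cutouts of Section~\ref{SInclusion1} (near which the cutting is locally finite), and if $\arg\vector r_0$ is bounded away from $0$ one reads off from the list of Section~\ref{SInclusion1} — using that by Theorem~\ref{TMain}~(ii) any cutout polyhedron not occurring there is disjoint from $D$ — that only finitely many cutout polyhedra come near $\vector r_0$; the case $\Re\vector r_0<0$, $\Im\vector r_0<0$ follows by symmetry. Together with the previous two steps this proves the proposition.

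The main obstacle is the criticality of $1$. The pikes that must be shown genuinely essential cluster at $1$ and therefore lie outside the disc $D$ settled in Theorem~\ref{TMain}~(ii); the statement actually needed — that between consecutive pikes arbitrarily close to $1$ the set $\GN{1}$ itself, and not merely the conjectural $\GC$, has a two-dimensional body separating them — has to be extracted from the fine geometry of $C_1,\dots,C_{19}$ rather than from part~(ii). Secondary difficulties are the explicit infinite family of cutout polyhedra clustering at $\I$, which amounts to running the loudspeaker analysis localised at the $4$-cycle of $\gamma_\I$, and the uniform ``no large cycle'' input used in the last step.
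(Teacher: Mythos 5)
Your reduction to the unit circle and the identification of $\overline{\GC}\cap\set{z\mid\abs{z}=1}=\set{1,\I,-\I}$ match what the paper does implicitly, and the accumulation of the cutout polygons of Section~\ref{SInclusion1} at $1$ does give weak criticality of $1$. But your argument for the \emph{criticality} of $1$ has a genuine gap, which you locate yourself without being able to close: you need that between consecutive ``straddling'' triangles arbitrarily close to $1$ the set $\GN{1}$ itself contains a two-dimensional body, so that a single convex cutout polyhedron cannot meet two of them. Theorem~\ref{TMain}~(ii) certifies $\GC\cap D\subseteq\GN{1}$ only for $\abs{z}\leq\frac{2047}{2048}$, which covers only the first thirty pikes, and the families $C_1,\dots,C_{19}$ cannot supply what is missing: they are cutout polygons, i.e.\ certificates of \emph{non}-membership in $\GN{1}$, so no ``fine geometry'' extracted from them can ever show that a point belongs to $\GN{1}$. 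The paper avoids this entirely by a dynamical argument that needs only the proven inclusion $\GN{1}\subseteq\GC$: for every $r=(x,y)$ in the closed unit disk with $0<y(n-1)\leq x$ it shows that $\gamma_r^2$ moves each point of $M_n=\set{(a,b)\in\Zi\mid\abs{a}+\abs{b}\leq n\land((a\leq0\lor b\leq0)\Rightarrow\abs{a}+\abs{b}<n)}$ one step around towards $(0,0)$, so every nontrivial cycle of such an $r$ is disjoint from $M_n$; hence a fixed cutout polyhedron, arising from one finite cycle, meets only finitely many of these sectors and therefore contains only finitely many of the points $P_4(n)$, which all lie outside $\GN{1}$ by part~(i) and accumulate at $1$. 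That is the key idea your proposal is missing.

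Two smaller points. For $\pm\I$ you do not need to construct an infinite family of cycles of nearby parameters (and ``$\gamma_\I$ has no fixed point, hence cycles of unbounded length'' is not an argument): the paper simply notes that $((a,b),(b,-a),(-a,-b),(-b,a))$ is a cycle of $\I$ for \emph{every} $(a,b)\in\Zi$, so infinitely many distinct cutout polyhedra already contain $\I$ itself. And your final step leans on an asserted but unproved uniform ``no large cycles near $\vector{r}_0$'' statement for circle points other than $1,\pm\I$; the paper concludes instead from $\overline{\GN{1}}\cap\partial\G{1}=\set{1,\I,-\I}$, so if you keep your route you still owe an argument for why a circle point outside $\overline{\GN{1}}$ cannot be an accumulation point of infinitely many cutout polyhedra.
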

\begin{proof}
For $n \in \N$ the line through $P_5(n)$ and $P_6(n)$ hits the origin and has a gradient of $\frac{1}{n}$. Let $r = (x, y) \in \C$ (we identify $\C$ and $\R^2$) such that $\abs{r} = 1$ and $0 < y (n - 1) \leq x$, and $z = (a, b) \in \Zi$ such that $\abs{a} + \abs{b} \leq n$ and $\Max\{\abs{a}, \abs{b}\} < n$. Then $r$ lies on the unit circle in the sector between the real axis and the line through $P_5(n - 1)$ and $P_6(n - 1)$. Then one can deduce the following cases for the product $r z = (x a - y b, x b + y a)$:\\[0.5\baselineskip]
$a > 0 \quad\land\quad b \geq 0 \quad\Rightarrow\quad a - 1 \leq x a - y b < a \quad\land\quad b < x b + y a < b + 1$\\
$a \leq 0 \quad\land\quad b > 0 \quad\Rightarrow\quad a - 1 < x a - y b < a \quad\land\quad b - 1 \leq x b + y a < b$\\
$a < 0 \quad\land\quad b \leq 0 \quad\Rightarrow\quad a < x a - y b \leq a + 1 \quad\land\quad b - 1 < x b + y a < b$\\
$a \geq 0 \quad\land\quad b < 0 \quad\Rightarrow\quad a < x a - y b < a + 1 \quad\land\quad b < x b + y a \leq b + 1$\\[0.5\baselineskip]
So the product, which is just $z$ rotated by the argument of $r$, is contained in the unit square lying next to $z$ in rotational direction. This implies a specific behavior of $\gamma_r^2(z) = \ceiling{r \floor{r z}}$ if $(a < 0 \lor b < 0) \Rightarrow \abs{a} + \abs{b} < n$:\\[0.5\baselineskip]
$a > 1 \quad\land\quad b \geq 0 \quad\Rightarrow\quad \gamma_r^2(z) = z + (-1, 1)$\\
$a = 1 \quad\land\quad b \geq 0 \quad\Rightarrow\quad \gamma_r^2(z) = z + (-1, 0)$\\
$a \leq 0 \quad\land\quad b > 1 \quad\Rightarrow\quad \gamma_r^2(z) = z + (-1, -1)$\\
$a \leq 0 \quad\land\quad b = 1 \quad\Rightarrow\quad \gamma_r^2(z) = z + (0, -1)$\\
$a < 0 \quad\land\quad b \leq 0 \quad\Rightarrow\quad \gamma_r^2(z) = z + (1, -1)$\\
$a \geq 0 \quad\land\quad b < 0 \quad\Rightarrow\quad \gamma_r^2(z) = z + (1, 1)$\\[0.5\baselineskip]
Therefore the orbits of the Gaussian integers $(n - 1, 1)$ and $(-n + 1, 0)$ both end up in $(0, 0)$ and cover the set $M_n \ce \{(a, b) \in \Zi \mid \abs{a} + \abs{b} \leq n \land ((a \leq 0 \lor b \leq 0) \Rightarrow \abs{a} + \abs{b} < n)\}$. For geometric reasons it follows that the orbit of any element of $M_n$ ends up in $(0, 0)$ even if $\abs{r} \leq 1$. In conclusion:\\[0.5\baselineskip]
$\fa n \in \N: \ex m \in \N: \fa r = (x, y) \in \C: (0 < y (n - 1) \leq x \:\land\: \abs{r} \leq 1 \Rightarrow \gamma_r^m(M_n) = \{(0, 0)\})$ \qquad ($m = n^2 - \ceiling{\frac{n}{2}}$ is a possible choice)\\[0.5\baselineskip]
Therefore all cycles of any $r$ having the properties above have empty intersection with $M_n$, which forces them to grow beyond all bounds as $n$ increases. Thus infinitely many cycles are needed to cut out, say, the set $\{P_4(n) \mid n \geq 3\}$ from the Loudspeaker which actually is being cut out entirely (Theorem~\ref{TMain}~(i)). It follows that $1$ is a critical point.\\[0.5\baselineskip]
Since for all $(a, b) \in \Zi$, $((a, b), (b, -a), (-a, -b), (-b, a))$ is a cycle of $(0, 1)$ and $((a, b), (-b, a), (-a, -b), (b, -a))$ is a cycle of $(0, -1)$, it follows that $\I$ and $-\I$ are weakly critical points. Theorem~\ref{TMain}~(i) implies that the intersection of the closure of $\GN{1}$ and the boundary of $\G{1}$ (unit circle) consists of these three points which implies that there are no other critical or weakly critical points.
\end{proof}

\end{document}